\def\Z{\mathbb{Z}}
\def\R{\mathbb{R}}
\def\F{\mathcal{F}}
\newcommand{\C}{\mathcal{C}}
\newcommand{\rel}{\mathcal{R}}
\newcommand{\N}{\mathbb{N}}
\newcommand{\p}{\mathcal{P}}
\newcounter{thm}
\newcounter{ex}
\newcounter{re}
\newtheorem{Theorem}[thm]{Theorem}
\newtheorem{Lemma}[thm]{Lemma}
\newtheorem{Proposition}[thm]{Proposition}
\newtheorem{example}[thm]{Example}
\newtheorem{remark}[thm]{Remark}
\newtheorem{Definition}[thm]{Definition}
\title[Smooth infinite dimensional cross-ratio]{On smooth infinite dimensional grassmannians, splittings and non-commutative generalized cross-ratio mappings}
\author{Jean-Pierre Magnot }
\address{J.-P.M.: Univ. Angers, CNRS, LAREMA, SFR MATHSTIC, F-49000 Angers, France
	\\ and \\  Lyc\'ee Jeanne d'Arc \\ Avenue de Grande Bretagne, \\ 63000 Clermont-Ferrand, France}
\email{magnot@math.cnrs.fr}
\begin{document}

\begin{abstract}
We describe basic diffeological structures related to splittings and Grassmannians for infinite dimensional vector spaces. We analyze and expand the notion of non-commutative cross-ratio and prove its smoothness. Then we illustrate this theory by examples, with some of them extracted from the existing literature related to infinite dimensional (Banach) Grassmannians, and others where the diffeological setting is a key primary step for rigorous definitions.     
\end{abstract}

\maketitle

\textit{Keywords:} Diffeological spaces, infinite dimensional Grassmanians, infinite dimensional groups, non-commutative cross-ratio, W-algebras.

\smallskip

\smallskip

\textit{MSC(2020):} 53C30, 58A40, 57P99, 14M15, 53C15, 37K20.

\section{Introduction}
The notion of cross-ratio plays an important role in modern geometry, and its basic application to characterization of cocyclic points in the plane has been for a while at the curriculum of high school students in some EU states. A clear exposition of its importance can be found in \cite{Lab2008}. Some various generalizations in symplectic and non-commutative geometry arise in recent algebraic and geometric works, see e.g. \cite{CO2019,DGP2013,GGRW2005,LMcS2009,Ret2014,RRS2020,Ze2006}. We analyze here one non-commutative analog to the cross-ratio present, algebraically, in a remark of \cite{RRS2020} following the pioneering work \cite{Ze2006}. We show that there are non necessary hypothesis in its definition, which gathers previously described compatible approaches. Therefore, we refresh the algebraic approach already highlighted in a \emph{smooth} or \emph{geometric} way. Our enhanced definition of non commutative cross-ratio between two admissible splittings is shown to produce a vector space isomorphism which depends \emph{smoothly} on the underlying admissible couple of splittings. Then we build up remarkable involutions on the space of admissible couples splittings, and derive from these involutions and from our initial non-commutative cross-ratio mapping that we note by $DV,$ two other mappings, that we note by $\tilde{DV}$ and $\Xi,$ from admissible couples of splittings to $GL(V).$ For this, a safe notion of smoothness on Grassmanians, splittings, and on the group $GL(V)$ is necessary.     

The geometry of finite dimensional Grassmanians in a finite dimensional vector space is an underlying object for many methods in differential geometry. Its generalizations to infinite dimensional settings are, in the actual state of knowledge, subject  to technical restrictions, and the same holds for the geometric structures that can be developed on Grassmanian ``manifolds" in view of applications. In most examples, the Grassmanian space of an infinite dimensional vector space $V$ is constituted by finite dimensional and finite codimensional vector subspaces of $V$, or it is defined through a fixed splitting $V = V_+ \oplus V_-  $ by two closed supplementary infinite dimensional vector spaces of $V.$ In each of these cases, this is obviously only one part of the full space of closed vector subspaces (with topological supplement) of $V$ that is under consideration. In particular, the important example of the restricted Grassmanian defined in the works of Pressley, Segal and Wilson (see e.g. \cite{Mick,PS} for two expositary textbooks on this subject) is defined through the eigenspaces of the sign of the Dirac operator acting on $L^2(S^1,\mathbb{C}),$ partly inspired by Sato's geometric approaches on integrable systems. These initial works had ramifications until very recent works, in related but different subjects. A selected bibliography on these developments is \cite{ACD2014,ACL2018,ALR2010,BRT2007,CDMP,CJ2019,GGRW2005,GO2010,Ma2006,Mick2,RRS2020,SJ2019,Tu2020}.

In all these works, one main technical limit to the investigations is the necessary underlying Hilbert or Banach space topology on the vector space $V,$ that enables, using smart restrictions, to define an atlas on the space of so-called restricted Grassmanians. We propose here to describe a differential geometry in the \emph{generalized} setting of diffeologies. We study spaces of splittings of an infinite dimensional vector space that can be in the category of Hilbert, Banach, Fr\'echet, locally convex, or even more generally, Fr\"olicher or diffeological vector spaces. Splittings are understood as eigenspaces of linear symmetries of $V.$ Within diffeological considerations, technical difficulties vanish to define a well-suited smoothness on spaces of splittings, and then on the space of Grassmanians constituted by  vector subspaces (closed, with supplement). 

The same way, the same diffeologies enable to talk safely of smooth linear operators acting on an infinite dimensional vector space $V$ which is not in the Hilbert or Banach setting, and to define smoothness on $L(V)$ and $GL(V).$ Therefore, this is natural for us to start this paper by necessary statements on diffeologies, known and new (section \ref{s:diff}). In particular, a short exposition on diffeological vector spaces, diffeological algebras and their groups of the units, and on the particular case of the linear algebra $L(V)$ and the linear group $GL(V)$ of a diffeological vector space $V,$ completes the existing statements in the literature. Therefore, we have all the necessary technical tools to define, given a diffeological algebra $A,$ the space of splittings of $V$ from the space $S(A)$ of involutive elements (or symmetries, section \ref{s:3.1}) of $A.$ Then the non-commutative cross-ratio $DV$ already announced can be defined on \emph{admissible} couples of splittings $Adm(V)$ in section \ref{s:3.2} and is easily proven to be smooth, with values in a linear group. From these basic descriptions, we highlight involutions on $\mathbf{S}(A)$ and on $Adm(V)$ that are intrinsically linked with the construction of the non-commutative cross-ratio. Then we extend $DV$ to two $GL(V)-$valued maps in section \ref{s:3.4} and in section \ref{s:3.5}. The first one, $\tilde{DV},$ is a direct extension to the cross-ratio map. The second one, noted by $\Xi,$ defines a smooth diffeomorphism on the space of topological supplement to a vector subspace of $V.$ We call this diffeological space the Grassmanian relative to $V.$ After these abstract constructions, this is highly needed to develop examples in section \ref{s:4}. The Pressley-Segal-Wilson restricted Grassmanian gives us a first framework in section \ref{s:4.1} that enables to highlight basic geometric properties of admissible splittings as they are described in our approach. But this setting is not adapted for a simple calculation of a non-trivial non-commutative cross-ratio in the actual state of knowledge and in the context of an illustration of this work. In order to produce a non trivial cross-ratio in an accessible and straightforward computation, we produce in section \ref{s:4.2.2} a second example of admissible splittings in the algebra of formal classical pseudo-differential operators which can be understood as a locally convex non complete vector space. The admissible couple of splittings that we consider is derived, from one side, from Kontsevich and Vishik's works on spectral analysis \cite{KV1}, see e.g. \cite{Scott}, and also from the Pressley-Segal-Wilson polarization. In this example, the non-commutative cross-ratio $\tilde{DV}$ is proven to be non-trivial. Finally, we consider splittings on spaces of mappings defined by signed measures. We prove that two different signed measures do not produce any admissible couple of splittings in the context that we consider.

These three settings intend to produce examples and to give illustrations to the complex constructions that are developed. The main novelty of this paper is, to our own feeling, on one side the smooth structures on Grassmanians and splittings that are no longer technically restrained, and on the other side the maps $DV,$ $\tilde{DV}$ and $\Xi,$ that extend the (commutative) cross-ratio in non-commutative setting, are now safely defined on admissible couples of splittings. Moreover, two among the three of our testing examples (formal pseudo-differential operators, signed measures) could not be reached in a Banach manifold category. Our approach has now to be compared to the other existing ones. 

	\section{A presentation of diffeologies and Fr\"olicher spaces} \label{s:diff}

\subsection{Diffeological and Fr\"olicher spaces}
In this section we follow
the expositions appearing in \cite{Ma2013,MR2016,MR2019}, and complete the classical exposition \cite{Igdiff} along the lines of the recent review \cite{GMW2023}. 

\begin{Definition} Let $X$ be a set.
	
	\noindent $\bullet$ A \textbf{p-parametrization} of dimension $p$ 
	on $X$ is a map from an open subset $O$ of $\R^{p}$ to $X$.
	
	\noindent $\bullet$ A \textbf{diffeology} on $X$ is a set $\p$
	of parametrizations on $X$ such that:
	
	- For each $p\in\N$, any constant map $\R^{p}\rightarrow X$ is in $\p$;
	
	- For any set of indexes $I$ and any family $\{f_{i}:O_{i}\rightarrow X\}_{i\in I}$
	of compatible maps that extend to a map $f:\bigcup_{i\in I}O_{i}\rightarrow X$,
	if $\{f_{i}:O_{i}\rightarrow X\}_{i\in I}\subset\p$, then $f\in\p$.
	
	- $\forall f\in\p$, $f : O\subset\R^{p} \rightarrow X$, $g \forall g \in C^\infty(O' , O)$, 
	in which  $O'$ is an open subset of an Euclidean space, we have $f\circ g\in\p$.
	
	\vskip 6pt  $(X,\p)$ is called if 
	called a \textbf{diffeological space} $\p$ is a diffeology on $X$ and, if $(X,\p)$ and $(X',\p')$ are two diffeological spaces, 
	a map $f:X\rightarrow X'$ is \textbf{smooth} if and only if $f\circ\p\subset\p'$. 
\end{Definition} 

For an historical introduction to diffeologies, see \cite{Sou}, inspired by the remarks from Chen in \cite{Chen}. The textbook \cite{Igdiff} gives a comprehensive exposition of basic concepts. 
\begin{Definition}
	Let $(X,\p)$ be a diffeological space. Let $\p'$ be a diffeology on $X.$ Then $\p'$ is a \textbf{subdiffeology} of $\p$ if and only if $\p' \subset \p$ or, equivalently, $Id_X:(X,\p')\rightarrow (X,\p)$ is smooth.
\end{Definition}
\begin{example}
	Let $X$ be a nonempty set. There is a minimal diffeology for inclusion. This diffeology is made of constant parametrizations and is called the \textbf{discrete diffeology}.
\end{example}
The category of diffeological spaces is very large, and carries many different pathological examples. Therefore, a restricted category can be useful, the category of Fr\"olicher spaces.

\begin{Definition} 
	A \textbf{Fr\"olicher} space is a triple $(X,\F,\C)$ such that
	
	- $\C$ is a set of paths $\R\rightarrow X$, called set of \textbf{contours}.
	
	- $\F$ is the set of functions from $X$ to $\R$, such that a function
	$f:X\rightarrow\R$ is in $\F$ if and only if for any
	$c\in\C$, $f\circ c\in C^{\infty}(\R,\R)$;
	
	- Let $c:\R\rightarrow X.$ $c \in \C \, \Leftrightarrow \, \forall f\in\F$, $f\circ c\in C^{\infty}(\R,\R)$.
	
	\vskip 5pt If $(X,\F,\C)$ and $(X',\F',\C ')$ are two
	Fr\"olicher spaces, a map $f:X\rightarrow X'$ is \textbf{smooth}
	if and only if $\F'\circ f\circ\C\subset C^{\infty}(\R,\R)$.
\end{Definition}

\noindent
This definition first appeared in \cite{FK} but the actual terminology 
was fixed to our knowledge in \cite{KM}.
The comparison of these two frameworks was published in \cite{Ma2006-3} in its first steps, the reader can
also see \cite{BKW2024,Ma2013,Ma2018-2,MR2016,Wa} for extended expositions.
In particular, it is explained in \cite{MR2016} that 
{\em Diffeological, Fr\"olicher and G\^ateaux smoothness are the same notion if we 
	restrict ourselves to a Fr\'echet context.}  For this, let us analyze how we generate a Fr\"olicher space, that is, how we implement a Fr\"olicher  structures on a given set $X.$
Any family of maps $\F_{g}$ from $X$ to $\R$ generates a 
Fr\"olicher structure $(X,\F,\C)$ by setting, after \cite{KM}:

- $\C=\{c:\R\rightarrow X\hbox{ such that }\F_{g}\circ c\subset C^{\infty}(\R,\R)\}$

- $\F=\{f:X\rightarrow\R\hbox{ such that }f\circ\C\subset C^{\infty}(\R,\R)\}.$

\noindent
We call $\F_g$ a \textbf{generating set of functions}
for the Fr\"olicher structure $(X,\F,\C)$.
A Fr\"olicher space $(X,\F,\C)$
carries a natural topology, the pull-back topology of
$\R$ via $\F$. 
Alternatively, one can start alternatively from a generating set of contours $\C_g,$ and set:

- $\F=\{f:X\rightarrow\R\hbox{ such that }f\circ\C_g\subset C^{\infty}(\R,\R)\}$

- $\C=\{c:\R\rightarrow X\hbox{ such that }\F\circ c\subset C^{\infty}(\R,\R)\}.$

In the case of a finite dimensional
differentiable manifold $X$ we can take $\F$ as the set of all smooth
maps from $X$ to $\R$, and $\C$ the set of all smooth paths from
$\R$ to $X.$ Then, the underlying topology of the
Fr\"olicher structure is the same as the manifold topology
\cite{KM}. We also remark that if $(X,\F, \C)$ is a Fr\"olicher space, we can
define a natural diffeology on $X$ by using the following family
of maps $f$ defined on open domains $D(f)$ of Euclidean spaces, see \cite{Ma2006-3}:
$$
\p_\infty(\F)=
\coprod_{p\in\N^*}\{\, f: D(f) \rightarrow X\, | \, D(f) \hbox{ is open in } \R^p \hbox{ and } \F \circ f \in C^\infty(D(f),\R) \}.$$
If $X$ is a finite-dimensional differentiable manifold, setting $\F=C^\infty(X,\R),$ this diffeology is
called the { \em nebulae diffeology}, see e.g. \cite{Igdiff}. We extend this diffeology to any Fr\"olicher space. Now,
we can easily show the following:

\begin{Proposition} \label{fd} \cite{Ma2006-3}
	Let$(X,\F,\C)$
	and $(X',\F',\C')$ be two Fr\"olicher spaces. A map $f:X\rightarrow X'$
	is smooth in the sense of Fr\"olicher if and only if it is smooth for
	the underlying diffeologies $\p_\infty(\F)$ and $\p_\infty(\F').$
\end{Proposition}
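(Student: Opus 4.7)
The plan is to unwind both notions of smoothness to the same elementary condition and check their equivalence directly by chasing definitions, with the single non-trivial input being the classical fact (Boman's theorem) that a map from an open subset of $\R^n$ to $\R$ is smooth if and only if its composition with every smooth curve $\R \to \R^n$ is smooth.

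For the forward implication, suppose $f:X\to X'$ is Fr\"olicher smooth, i.e.\ $\F'\circ f \circ \C \subset C^\infty(\R,\R)$. Pick any plot $p \in \p_\infty(\F)$ with domain $D(p)\subset \R^n$, so by definition $\F\circ p \subset C^\infty(D(p),\R)$. I need to show that $f\circ p \in \p_\infty(\F')$, i.e.\ that $\phi'\circ f\circ p$ is smooth on $D(p)$ for each $\phi'\in\F'$. By Boman's theorem it suffices to verify this after pre-composition with an arbitrary smooth curve $c:\R\to D(p)$. But then $\F\circ(p\circ c) = (\F\circ p)\circ c \subset C^\infty(\R,\R)$, so $p\circ c \in \C$; Fr\"olicher smoothness of $f$ then yields $\phi'\circ f\circ p\circ c \in C^\infty(\R,\R)$, and Boman gives the desired smoothness of $\phi'\circ f\circ p$.

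For the converse, assume $f$ is smooth for the nebulae diffeologies $\p_\infty(\F)$ and $\p_\infty(\F')$. Given $c\in \C$, the defining property of contours says $\F\circ c \subset C^\infty(\R,\R)$, which is precisely the statement that $c$, viewed as a $1$-parametrization with domain $\R$, lies in $\p_\infty(\F)$. Diffeological smoothness thus yields $f\circ c \in \p_\infty(\F')$, i.e.\ $\F'\circ f\circ c \subset C^\infty(\R,\R)$. Since $c\in \C$ was arbitrary, $\F'\circ f\circ \C \subset C^\infty(\R,\R)$ and $f$ is Fr\"olicher smooth.

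The only technical ingredient is the curve-smoothness criterion used in the forward direction; everything else is bookkeeping with the definitions of $\C$, $\F$, and $\p_\infty(\F)$. I expect this classical criterion, which is exactly the bridge between Fr\"olicher's curve-and-function duality and the nebulae diffeology, to be the sole substantive step in the argument.
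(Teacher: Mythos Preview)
Your proof is correct; the paper itself does not give a proof but simply cites \cite{Ma2006-3} and asserts the result as easy. Your argument via Boman's theorem is the standard one and is exactly the bridge one expects between the curve-based Fr\"olicher framework and the plot-based nebulae diffeology.
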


Thus, Proposition \ref{fd} and the foregoing remarks imply that 
the following implications hold:
\vskip 12pt

\begin{tabular}{ccccc}
	smooth manifold  & $\Rightarrow$  & Fr\"olicher space  & $\Rightarrow$  & diffeological space
\end{tabular}

	\subsection{Diffeologies on spaces of mappings, products, quotients, subsets and algebraic structures}
	\begin{Proposition} \label{prod1} \cite{Igdiff,Sou} Let $(X,\p)$ and $(X',\p')$
		be two diffeological spaces. There exists a diffeology 
		$\p\times\p'$ on
		$X\times X'$  made of plots $g:O\rightarrow X\times X'$
		that decompose as $g=f\times f'$, where $f:O\rightarrow X\in\p$
		and $f':O\rightarrow X'\in\p'$. We call it the \textbf{product diffeology}, 
		and  this construction extends to an infinite (maybe not countable) product.
	\end{Proposition}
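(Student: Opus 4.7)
The plan is to verify the three diffeology axioms directly for the candidate set. I would first reformulate the defining condition equivalently: a parametrization $g : O \to X \times X'$ lies in $\p \times \p'$ if and only if $\mathrm{pr}_1 \circ g \in \p$ and $\mathrm{pr}_2 \circ g \in \p'$, since the decomposition $g = f \times f'$ in the statement means $g(u) = (f(u), f'(u))$ with $f = \mathrm{pr}_1 \circ g$ and $f' = \mathrm{pr}_2 \circ g$. This reformulation turns each of the three axioms into a pair of axioms, one for each factor, and reduces the whole verification to a componentwise check.

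For the constants axiom, any constant map $c : \R^p \to X \times X'$ has two constant projections, which lie in $\p$ and $\p'$ respectively, so $c \in \p \times \p'$. For the gluing axiom, a compatible family $\{g_i : O_i \to X \times X'\}_{i \in I}$ drawn from $\p \times \p'$ yields compatible families $\{\mathrm{pr}_1 \circ g_i\}$ in $\p$ and $\{\mathrm{pr}_2 \circ g_i\}$ in $\p'$, because $g_i$ and $g_j$ agree on $O_i \cap O_j$ precisely when both their projections agree there. The gluing axiom applied on each factor then produces $f \in \p$ and $f' \in \p'$ on $\bigcup_{i \in I} O_i$, and $(f,f')$ is the required extension to $\p \times \p'$. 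For the precomposition axiom, if $g = (f,f') \in \p \times \p'$ and $h \in C^\infty(O',O)$, then $g \circ h = (f \circ h, f' \circ h)$, with each component a plot by the corresponding axiom on each factor.

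For an arbitrary product $\prod_{i \in I} X_i$ of diffeological spaces $(X_i,\p_i)$, the same strategy applies verbatim: declare $g : O \to \prod_{i \in I} X_i$ to be a plot iff $\mathrm{pr}_i \circ g \in \p_i$ for every $i \in I$. Constancy, compatibility, and smooth precomposition all commute with projection onto each factor, so the three diffeology axioms follow from the corresponding axioms for each $\p_i$, regardless of the cardinality of $I$. The argument presents no real obstacle beyond observing that the compatibility of a family on the product is equivalent to the componentwise compatibility of its projections; this is what makes the verification routine and explains why the infinite case does not require any additional hypothesis.
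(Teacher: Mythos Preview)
Your verification is correct and complete: the reformulation via projections is the right way to reduce each axiom to a componentwise check, and each of the three axioms follows as you describe. Note that the paper does not supply its own proof of this proposition; it is stated as a known result with citations to \cite{Igdiff,Sou}, so there is nothing further to compare against.
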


	The same holds on contours of Frölicher spaces in a cartesian product (compare with \cite{KM}).
	%
	  %
	We can also state the above result for infinite products;
	we simply take Cartesian products of the plots, or of the contours.

	After \cite{Sou} and 
	\cite[p. 27]{Igdiff}, let $(X,\p)$ be 
	a diffeological space, and let $X'$ be a set. Let 
	$f:X\rightarrow X'$ be a map. We define the \textbf{push-forward 
		diffeology}, noted by $f_*(\p),$ as the coarsest (i.e. minimal for inclusion) among 
	the diffologies on $X'$, which contains $f \circ \p,$ that is, for which $f$ is smooth. Conversly, if $(X',\p')$ is a diffeological space and if $X$ is a set, if $f:X\rightarrow X',$ the \textbf{pull-back diffeology} noted by $f^*(\p')$ is the diffeology on $X,$ maximal for inclusion, for which $f$ is smooth. 
	
	\begin{Proposition} \label{quotient} \cite{Igdiff,Sou}
		Let $(X,\p)$ b a diffeological space and $\rel$ an equivalence 
		relation on $X$. There is a natural diffeology on $X/\rel$, 
		noted by $\p/\rel$, defined as the push-forward diffeology on 
		$X/\rel$ by the quotient projection $X\rightarrow X/\rel$. This diffeology is made of plots that read locally as maps $\pi \circ p,$ where $p \in \p.$
	\end{Proposition}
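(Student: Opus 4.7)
The plan is to prove the statement in two stages: first establish the existence of the push-forward diffeology $\pi_*(\p)$ for the quotient projection $\pi: X \rightarrow X/\rel$ as a special case of the push-forward construction recalled just above, then prove the explicit local-factorization description of its plots.

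For the existence, I would note the general fact that for any map $f:X \rightarrow X'$ between a diffeological space $(X,\p)$ and a set $X'$, the collection of diffeologies on $X'$ containing $f\circ \p$ is nonempty (the coarse diffeology of all parametrizations belongs to it) and closed under arbitrary intersection, since each of the three diffeology axioms passes trivially to intersections. Its intersection is then the coarsest such diffeology, namely $f_*(\p)$. Applied to $f = \pi$, this yields $\pi_*(\p)$ on $X/\rel$.

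For the explicit description, introduce the candidate set $\D$ of parametrizations $g:O \rightarrow X/\rel$ such that every $t_0 \in O$ admits an open neighborhood $U \subset O$ on which $g|_U = \pi\circ p_U$ for some $p_U \in \p$. I would check the three axioms for $\D$: a constant parametrization equal to $[x]$ is $\pi \circ c_x$ for the constant plot $c_x \in \p$; the locality axiom is essentially built into the definition, since a parametrization that is locally a member of $\D$ is, after unfolding two nested ``locally'', still locally of the form $\pi \circ p$; compatibility with a smooth precomposition $h:O' \rightarrow O$ follows by taking $V = h^{-1}(U)$ and using that $p_U \circ h|_V \in \p$ by the compatibility axiom for $\p$. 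The inclusion $\pi\circ \p \subset \D$ is trivial (take $U=O$); conversely any diffeology on $X/\rel$ containing $\pi\circ \p$ must contain $\D$ by its own locality axiom, since each $g \in \D$ coincides locally with elements of $\pi\circ \p$. Hence $\D = \pi_*(\p)$.

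The main technical obstacle is the verification of the locality axiom for $\D$, which requires unfolding two layers of ``locally'' and invoking that restrictions of plots in $\p$ to smaller open subsets remain plots (by precomposition with the smooth inclusion). The remaining steps are direct applications of the axioms of $\p$ together with the minimality characterization of the push-forward diffeology.
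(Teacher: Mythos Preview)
Your proof is correct and follows the standard argument found in the cited references \cite{Igdiff,Sou}. Note however that the paper itself does not provide a proof of this proposition: it is stated as a known result with a citation to Iglesias-Zemmour and Souriau, and no \texttt{proof} environment follows it. Your two-stage argument (existence of $\pi_*(\p)$ via the intersection of all diffeologies containing $\pi\circ\p$, then identification of this minimal diffeology with the set $\D$ of locally liftable parametrizations) is exactly the textbook derivation, and your handling of the locality axiom for $\D$ is the right place to be careful.
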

	
	Given a subset $X_{0}\subset X$, where $X$ is a Fr\"olicher space
	or a diffeological space, we equip $X_{0}$ with structures 
	induced by $X$ as follows:
	\begin{enumerate}
		\item If $X$ is equipped with a diffeology $\p$, we define
		a diffeology $\p_{0}$ on $X_{0}$ called the \textbf{subset or trace 
			diffeology}, see \cite{Sou,Igdiff}, by setting 
		\[ 
		\p_{0}=
		\lbrace p\in\p \hbox{ such that the image of }p\hbox{ is a subset 
			of } X_{0}\rbrace\; .
		\]
		\item If $(X,\F,\C)$ is a Fr\"olicher space, we take as a 
		generating set of maps $\F_{g}$ on $X_{0}$ the restrictions of the 
		maps $f\in\F$. In this case, the contours (resp. the induced 
		diffeology) on $X_{0}$ are the contours (resp. the plots) on $X$ 
		whose images are a subset of $X_{0}$.
	\end{enumerate}
	\begin{example}
		Let us consider the infinite product diffeology on $Y^X.$ This is the largest diffeology for which the evaluation maps $ev_x$ are smooth for each $x \in X.$ Therefore, any other diffeology on any subset of $Y^X$ for which all the evaluation maps are smooth is a subdiffeology of the subset diffeology inherited from this product diffeology.
	\end{example}
	Following \cite{Igdiff}, 
	let $(X,\p)$ and $(X',\p')$ be diffeological spaces. 
	Let $M \subset C^\infty(X,X')$ be a set of smooth maps. 
	The \textbf{functional diffeology} on $S$ is the diffeology $\p_S$
	made of plots 
	$$ \rho : D(\rho) \subset \R^k \rightarrow S$$
	such that, for each $p \in \p$, the maps 
	$\Phi_{\rho, p}: (x,y) \in D(p)\times D(\rho) \mapsto 
	\rho(y)(x) \in X'$ are plots of $\p'.$ 
	We have, see \cite[Paragraph 1.60]{Igdiff}:
	
	\begin{Proposition} \label{cvar} 
		Let $X,Y,Z$ be diffeological spaces. Then, 
		$$
		C^\infty(X\times Y,Z) = C^\infty(X,C^\infty(Y,Z)) = 
		C^\infty(Y,C^\infty(X,Z))
		$$
		as diffeological spaces equipped with functional diffeologies.
	\end{Proposition}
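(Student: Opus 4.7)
My plan is to prove this Cartesian-closedness statement by exhibiting the canonical currying bijection and checking it is a diffeomorphism for the functional diffeologies. Concretely, to a map $f:X\times Y\to Z$ I associate $\hat f:X\to Z^{Y}$ by $\hat f(x)(y)=f(x,y)$. The whole proof then reduces to two symmetric observations: (i) the bijection $f\mapsto \hat f$ restricts to a bijection between the underlying sets of $C^\infty(X\times Y,Z)$ and $C^\infty(X,C^\infty(Y,Z))$, and (ii) a parametrization $\rho$ is a plot of the left-hand side iff the corresponding parametrization $\hat\rho$ is a plot of the right-hand side. The third equality then follows by swapping the roles of $X$ and $Y$, using that the swap $X\times Y\to Y\times X$ is a diffeomorphism for the product diffeology of Proposition \ref{prod1}.

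For (i), the forward direction uses that for fixed $x\in X$, the map $y\mapsto(x,y)$ is smooth (its composition with a plot $q$ is the plot $(x,q)$ in the product diffeology), so $\hat f(x)\in C^\infty(Y,Z)$; smoothness of $\hat f$ then follows by noting that, given a plot $p:U\to X$, one checks the defining condition of the functional diffeology on $C^\infty(Y,Z)$ by taking an auxiliary plot $q:V\to Y$, forming the product plot $(p,q):U\times V\to X\times Y$, and composing with $f$. Conversely, if $\hat f$ is smooth, then for any plot $(p,q):U\to X\times Y$, smoothness of $\hat f\circ p$ evaluated along $q$ yields $(u,v)\mapsto f(p(u),q(v))$ a plot of $Z$, and precomposing with the smooth diagonal $u\mapsto(u,u)$ recovers $f\circ(p,q)$ as a plot.

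For (ii), the argument is a parametrized version of the same bookkeeping. Given $\rho:W\to C^\infty(X\times Y,Z)$ a plot and the associated $\hat\rho:W\to C^\infty(X,C^\infty(Y,Z))$ defined by $\hat\rho(w)(x)(y)=\rho(w)(x,y)$, unfolding the two nested functional diffeologies one sees that $\hat\rho$ is a plot iff, for every pair of plots $p:U\to X$ and $q:V\to Y$, the map
\[
(u,v,w)\longmapsto \rho(w)\bigl(p(u),q(v)\bigr)
\]
is a plot of $Z$; this is exactly the plot condition for $\rho$ applied to the product plot $(p,q):U\times V\to X\times Y$. Conversely, starting from any plot $r:O\to X\times Y$, write $r=(p,q)$ with $p,q$ smooth, obtain the three-variable plot above from the plot condition for $\hat\rho$, and precompose with the smooth diagonal $u\mapsto(u,u)$ to recover $(u,w)\mapsto\rho(w)(r(u))$ as a plot of $Z$.

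I expect no serious obstacle: the product diffeology behaves well with Cartesian products of plots, and the only technical move is the diagonal trick used to convert a bi-parameter plot into a single-parameter one. The delicate point to present clearly is rather notational, namely keeping track of the three nested levels of the functional diffeology on $C^\infty(X,C^\infty(Y,Z))$, but once those are unfolded the equivalence is immediate. The symmetry between $C^\infty(X,C^\infty(Y,Z))$ and $C^\infty(Y,C^\infty(X,Z))$ then follows without any extra work from the canonical smooth swap isomorphism $X\times Y\simeq Y\times X$.
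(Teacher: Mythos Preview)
Your argument is correct and is precisely the standard proof of Cartesian closedness for diffeological spaces: the currying bijection, the check that it restricts to smooth maps via the product-plot/diagonal trick, and the parametrized version showing the bijection is a diffeomorphism for the functional diffeologies. The paper itself does not supply a proof of this proposition; it simply records it as a known fact with a reference to \cite[Paragraph 1.60]{Igdiff}, and your write-up is essentially the argument one finds there.
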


	
	
	
	
	\subsection{Groups, vector spaces, algebra and fiber bundles in diffeologies}
	Given an algebraic structure, we can define a
	corresponding compatible diffeological (resp. Fr\"olicher) structure, see 
	for instance \cite{Les}. For example, see \cite[pp. 66-68]{Igdiff},
	if $\R$ is equipped with its canonical diffeology (resp. Fr\"olicher
	structure), we say that an $\R-$vector space equipped with a 
	diffeology (resp. Fr\"olicher structure) is a diffeological (resp. Fr\"olicher) vector space if addition and 
	scalar multiplication are smooth. We state the same way, following \cite{Les,Ma2013}:

	\begin{Definition}
		Let $G$ be a group equipped with a diffeology (resp. Fr\"olicher 
		structure). We call it a \textbf{diffeological (resp. Fr\"olicher) group} 
		if both multiplication and inversion are smooth.
	\end{Definition}

 \subsubsection{The group of diffeomorphisms of a diffeological space.}\label{ss:diff}
 Let $(X,\p)$ be a diffeological space. The space $C^\infty(X,X)$ is endowed with its functional diffeology and a diffeomorphism on $X$ is a map $g \in C^\infty(X,X)$ such that there exists an inverse map $g^{-1} \in C^\infty(X,X)$ such that $g \circ g^{-1} = g^{-1} \circ g =Id_X.$ The elementary algebraic properties of the inverse assert that the inverse is unique. The set of diffeomorphisms on $X$ is noted by $Diff(X).$ Unlike for the Fr\'echet Lie group of diffeomorphisms of a finite dimensional compact manifold, the group $Diff(X)$ is not a priori a diffeological group because the inverse map $g \mapsto g^{-1}$ is not a priori smooth for the (trace of) the functional diffeology $\p_{Diff(X)}$ on $Diff(X).$ In fact, only the composition of diffeomorphisms is smooth. Our unability to state smoothness of the inversion is due to a lack of inverse functions theorem. Therefore, we have to define the following diffeology. 

 \begin{Proposition}
     Let $\p_{Diff(X)}^{(-1)}$ be the diffeology on $Diff(X)$ defined by pull-back $$\p_{Diff(X)}^{(-1)} = \Phi^*\left(\p_{Diff(X)} \times \p_{Diff(X)}\right)$$
     where $$\Phi: g \in Diff(X) \mapsto (g,g^{-1}) \in Diff(X)\times Diff(X).$$
     Then $\left(Diff(X),\p_{Diff(X)}^{(-1)}\right)$ is a diffeological group.
 \end{Proposition}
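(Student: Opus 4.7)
The plan is to verify, directly from the definition of the pull-back diffeology, the two group axioms (smoothness of composition and smoothness of inversion). First I would record that $\p_{Diff(X)}^{(-1)}$ is automatically a diffeology, since pull-back diffeologies always are, and unwind what membership means: a parametrization $p:O\rightarrow Diff(X)$ lies in $\p_{Diff(X)}^{(-1)}$ if and only if both $p$ and its pointwise inverse $t\mapsto p(t)^{-1}$ are plots of the functional diffeology $\p_{Diff(X)}.$

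For the smoothness of the inversion $\iota:g\mapsto g^{-1},$ I would exploit the identity $\Phi\circ\iota=\sigma\circ\Phi,$ where $\sigma:(a,b)\mapsto(b,a)$ is the swap on $Diff(X)\times Diff(X).$ If $p\in\p_{Diff(X)}^{(-1)}$ then $\Phi\circ p$ is a plot of the product diffeology; composing with the smooth swap $\sigma$ produces another plot, which is precisely $\Phi\circ\iota\circ p,$ whence $\iota\circ p\in\p_{Diff(X)}^{(-1)}.$

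For the smoothness of composition, the crucial auxiliary fact is that $(f,g)\mapsto f\circ g$ is smooth as a map $C^\infty(X,X)\times C^\infty(X,X)\rightarrow C^\infty(X,X)$ for the functional diffeologies. This follows from Proposition \ref{cvar} (the exponential law): it is enough to see that $(f,g,x)\mapsto f(g(x))$ is smooth, which reduces to two successive evaluations. Given plots $p,q$ of $\p_{Diff(X)}^{(-1)},$ I would then compute
\[\Phi\bigl(p(t)\circ q(t)\bigr)=\bigl(p(t)\circ q(t),\,q(t)^{-1}\circ p(t)^{-1}\bigr).\]
The first component is a plot of $\p_{Diff(X)}$ because $p,q$ are, and composition is smooth; the second component is a plot because $t\mapsto p(t)^{-1}$ and $t\mapsto q(t)^{-1}$ are plots of $\p_{Diff(X)}$ by the very definition of $\p_{Diff(X)}^{(-1)},$ and composition is again smooth. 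Hence $\Phi\circ(p\cdot q)$ is a plot of the product diffeology and $p\cdot q\in\p_{Diff(X)}^{(-1)}.$

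The only step that is not pure bookkeeping is the smoothness of composition in the functional diffeology on $C^\infty(X,X);$ I expect the main obstacle to be a clean invocation of Proposition \ref{cvar} in this setting, together with the observation that the functional diffeology on $Diff(X)\subset C^\infty(X,X)$ is the subset diffeology, so that composition restricts well. Once this is in hand, both axioms fall out essentially by chasing the definition of the pull-back.
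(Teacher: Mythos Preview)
Your argument is correct and is exactly the ``straightforward'' verification the paper alludes to (the paper gives no details beyond that word): you unpack the pull-back description of the plots, use the swap identity $\Phi\circ\iota=\sigma\circ\Phi$ for inversion, and invoke the smoothness of composition on $C^\infty(X,X)$ (already asserted just before the Proposition, and derivable from Proposition~\ref{cvar}) for multiplication. Nothing is missing.
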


 The proof is straightforward. Therefore, considering now a diffeological ring $R$, the set of invertible elements $R^*$ of $R$ has itself a specific diffeology $\p_{R^*}^{(-1)}$ which differ from the subset diffeology $\p_{R^*}$ inherited from the inclusion $R^* \subset R.$ We only have in general $$\p_{R^*}^{(-1)} \subset \p_{R^*}.$$ 
 The same construction holds for a diffeological field, a diffeological module, a diffeological algebra: the group of the units (i.e. the group of the invertible elements) can be endowed with a diffeology ``òf $\p^{(-1)}-$type''. 
 
 \begin{remark} \label{rk:units}
     In the case of a $\R-$algebra or  a $\mathbb{C}-$algebras $A$, the group $A^*$ may carry this pathology and there is an identified class of examples of infinite dimensional algebras for which $A^*$ is a Lie group for the subset diffeology, in the setting of locally convex, Mackey-complete algebras \cite{Glo2002,GN2012}. Particular examples of this class are Banach algebras. In that case, $\p_{A^*}^{(-1)} = \p_{A^*}.$ 
 \end{remark} 
 
 \begin{example}
     When considering fields, the groups $\R^*$ and $\mathbb{C}^*$ have inversion which is smooth for the subset diffeology, in other words, $\p_{ \R^*}^{(-1)} = \p_{\R^*}$ and $\p_{\mathbb{C}^*}^{(-1)} = \p_{\mathbb{C}^*}.$
 \end{example}
\subsubsection{Linear group of a diffeological vector space} \label{ss:l(v)}
Let $V,V'$ be two diffeological vector spaces. We note by $L(V,V')$ the space of smooth linear maps from $V$ to $V'$ equipped with the  functional diffeology (understood as a subset diffeology inherited from the inclusion $L(V;V') \subset C^\infty(V,V')$). We note $L(V)$ for short, instead of $L(V,V).$  

We recall that, with no local inverse functions theorem at hand, it is impossible to prove that the group $GL(V)$ of invertible maps has a smooth inversion, and following \cite{GMW2023}, we equip the (algebraic) group $GL(V)$ with the diffeology of $Diff(V)$ (understood as the subset diffeology inherited by the inclusion $GL(V)\subset Diff(V)$). Since composition and inversion is smooth on $Diff(V),$ so it is on $GL(V).$  This construction provides a concrete class of examples of diffeological algebras and their diffeological group of the units  that complete the classical examples recalled in Remark \ref{rk:units} and where the diffeology $\p_{A^*}^{(-1)}$ may differ from the diffeology $\p_{A^*}.$ These examples carry a wide range  of linear algebras of vector spaces with adequate properties to define smoothness, including vector spaces with pathological properties, see e.g. \cite{perv-grenoble,wu-grenoble} for actual concerns on ``exotic'' diffeological vector spaces and their technical properties. 
\section{Projections, symmetries, Grassmannians, splittings and non-commutative cross-ratio}
 \subsection{Symmetries and projections} \label{s:3.1}
 Let $A$ be a diffeological unital algebra with group of the units $A^*.$  This algebra is assumed to act smoothly on a diffeological vector space $V.$ 
 We note by $$\mathbf{P}(A) = \left\{ p \in A \, | \, p^2 = p \right\}$$
 the set of projectors in $A$ and by 
 $$ \mathbf{S}(A) = \left\{ s \in A \, | \, \exists p \in \mathbf{P}(A), s = 2p - 1_A \right\}.$$ 
 \begin{remark}
     The classical condition $s^2=1_A$ is equivalent, that is, $$\forall x \in V, x \mapsto \frac{1}{2}(x+s(x))$$ defines the projection $p \in \mathbf{P}(A)$ such that $s = 2p - 1_A .$
 \end{remark}
 When $A = L(V),$  this picture specialize to the following.
 We note by $$\mathbf{P}(V) = \left\{ p \in L(V) \, | \, p^2 = p \right\}$$
 the set of (diffeological) projectors in $V$ and by 
 $$ \mathbf{S}(V) = \left\{ s \in L(V) \, | \, s^2 = Id_V \right\}$$ the set of (diffeological) symmetries on $V.$
 \begin{Lemma} 
 	 Let $i_A$ and $i_{A^*}$ be the inclusion maps from $\mathbf{S}(A)$ to $A$ and to $A^*$ respectively, then $i_A^*(\p_A) = i_{A^*}^*(\p_{A^*})=i_{A^*}^*(\p_{A^*}^{(-1)}).$
 \end{Lemma}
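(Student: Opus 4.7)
The plan is to exploit the central algebraic observation that every symmetry is its own inverse. Indeed, any $s \in \mathbf{S}(A)$ is of the form $s = 2p - 1_A$ for some $p \in \mathbf{P}(A)$, so
\[
s^2 = (2p - 1_A)^2 = 4p^2 - 4p + 1_A = 1_A,
\]
which shows $s^{-1} = s$. In particular $\mathbf{S}(A) \subset A^*$, so that both inclusions $i_A$ and $i_{A^*}$ are well-defined, and moreover the map $s \mapsto s^{-1}$ restricts to the identity on $\mathbf{S}(A)$.

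First, I would prove $i_A^*(\p_A) = i_{A^*}^*(\p_{A^*})$. By definition of the pull-back (trace) diffeology, a parametrization $\rho:O\to \mathbf{S}(A)$ belongs to $i_A^*(\p_A)$ iff $i_A\circ\rho$ is a plot of $A$. It belongs to $i_{A^*}^*(\p_{A^*})$ iff $i_{A^*}\circ\rho$ is a plot of the subset diffeology $\p_{A^*}$, which by definition is itself the trace of $\p_A$ on $A^*$, i.e.\ a parametrization into $A^*$ that is a plot of $A$. Since $\mathbf{S}(A)\subset A^*$, the image condition is automatic, and the two conditions reduce to the same requirement: that $\rho$ be a plot of $A$.

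Next, I would establish $i_{A^*}^*(\p_{A^*}) = i_{A^*}^*(\p_{A^*}^{(-1)})$. The general inclusion $\p_{A^*}^{(-1)} \subset \p_{A^*}$ gives one containment directly. For the converse, unpacking the definition, a parametrization $\rho:O\to A^*$ lies in $\p_{A^*}^{(-1)} = \Phi^*(\p_{A^*}\times\p_{A^*})$ iff both $\rho$ and $\rho^{\vee}:x\mapsto \rho(x)^{-1}$ are plots of $\p_{A^*}$. Restricting to parametrizations with image inside $\mathbf{S}(A)$, the identity $s^{-1}=s$ yields $\rho^{\vee}=\rho$, so the second condition collapses to the first. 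Thus $\rho \in i_{A^*}^*(\p_{A^*})$ implies $\rho \in i_{A^*}^*(\p_{A^*}^{(-1)})$, completing the triple equality.

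The argument is essentially a definition-chase, so there is no serious obstacle; the only delicate point is to remember that the potential failure of $\p_{A^*}^{(-1)} = \p_{A^*}$ hinges precisely on the smoothness of inversion, and this failure is harmless here because inversion acts as the identity on $\mathbf{S}(A)$.
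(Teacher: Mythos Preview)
Your proof is correct and follows essentially the same idea as the paper's: the key observation in both is that $s^{-1}=s$ on $\mathbf{S}(A)$, so the map $\Phi:u\mapsto(u,u^{-1})$ restricts to the diagonal inclusion $u\mapsto(u,u)$ on $\mathbf{S}(A)$, making the $\p_{A^*}^{(-1)}$ condition vacuous there. The paper states this observation in a single sentence, while you have carefully unpacked the definitions of the pull-back diffeologies; the substance is the same.
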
 
\begin{proof}
	The map 
	$$\Phi: u \in A^* \mapsto (u,u^{-1})\in A^2$$
	 restricts to the canonical (diagonal) inclusion $u \mapsto (u,u)$ from $\mathbf{S}(A)$ to $A^2.$
	\end{proof}
We also note by $\p_A$ the subset diffeology of $\mathbf{P}(A) \subset A$  and as well as the subset diffeology of $\mathbf{S}(A) \subset A.$ We can state:
\begin{Proposition}
	There is a diffeomorphism
	$$ p \in \mathbf{P}(A) \mapsto 2 p - 1_A  \in \mathbf{S}(A)$$
	for the diffeology $\p_A$ on $\mathbf{P}(A).$
\end{Proposition}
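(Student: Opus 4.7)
The plan is to exhibit an explicit smooth inverse and then use the universal property of the subset diffeology. Define
$$\phi: p \mapsto 2p - 1_A \quad \text{and} \quad \psi: s \mapsto \tfrac{1}{2}(s + 1_A),$$
both viewed first as maps $A \to A$. The entire proof then reduces to (i) checking by direct algebra that they restrict to mutually inverse bijections between $\mathbf{P}(A)$ and $\mathbf{S}(A)$, and (ii) checking that on the ambient algebra they are smooth, from which compatibility with the subset diffeology is automatic.

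For step (i): if $p^{2}=p$, then $(2p-1_A)^{2} = 4p^{2} - 4p + 1_A = 1_A$, so $\phi(p) \in \mathbf{S}(A)$. Conversely, if $s^{2} = 1_A$, then
$$\left(\tfrac{1}{2}(s+1_A)\right)^{2} = \tfrac{1}{4}(s^{2} + 2s + 1_A) = \tfrac{1}{4}(2s + 2\cdot 1_A) = \tfrac{1}{2}(s+1_A),$$
so $\psi(s) \in \mathbf{P}(A)$. As affine maps of $A$ they are evidently inverse to each other, so by restriction they are inverse bijections between $\mathbf{P}(A)$ and $\mathbf{S}(A)$. I would also note that this matches the formulation in the remark preceding the proposition, where the same projection is associated to each symmetry.

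For step (ii): because $A$ is a diffeological algebra (over $\R$ or $\mathbb{C}$), addition, scalar multiplication and the constant map $1_A$ are smooth, so $\phi$ and $\psi$ are smooth as maps $A \to A$. Now let $\rho$ be a plot of $\mathbf{P}(A)$ for the subset diffeology $\p_A$; by definition $\rho$ is a plot of $A$ whose image lies in $\mathbf{P}(A)$. Then $\phi \circ \rho$ is a plot of $A$ whose image, by (i), lies in $\mathbf{S}(A)$, hence is a plot of $\mathbf{S}(A)$ for its subset diffeology. The same argument applied to $\psi$ shows that $\psi$ is smooth, so $\phi$ is a diffeomorphism.

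I do not expect a genuine obstacle: the content is just the algebraic identity $(2p-1_A)^{2} = 1_A \Leftrightarrow p^{2} = p$ plus the functorial remark that affine combinations with real coefficients are smooth in any diffeological algebra, and that smoothness passes trivially through subset diffeologies. The only mild point worth stating carefully is that I am implicitly using the definition of the subset diffeology from Section~\ref{s:diff}, so that plots of $\mathbf{P}(A)$ and $\mathbf{S}(A)$ are literally plots of $A$ with constrained image, making the argument a one-line composition.
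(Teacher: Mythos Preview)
Your proof is correct and follows essentially the same approach as the paper, which merely states that the result follows from the smoothness of addition and multiplication on $A$. You have supplied the details the paper leaves implicit: the explicit inverse $\psi$, the algebraic verification that $\phi$ and $\psi$ exchange $\mathbf{P}(A)$ and $\mathbf{S}(A)$, and the routine check that smoothness passes through the subset diffeology.
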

\begin{proof}
	It follows from the smoothness of addition and multiplication on $A.$
	\end{proof}
The previous results apply to $\mathbf{P}(V) $ and to
$ \mathbf{S}(V) $ according to the description of $L(V)$ as a diffeological algebra given in section \ref{ss:l(v)}.
Let us recall, following the classical algebraic decompositions, that given $s \in \mathbf{S}(A) $ and $u \in A,$ 
\begin{eqnarray} \label{eq:dec-s} 
(s+u) \in \mathbf{S}(A) & \Leftrightarrow & su+us+u^2=0
\end{eqnarray}  
Let $$AS(s) : u \in A \mapsto us+su \in A.$$
Let $s \in \mathbf{S}(A).$ We define
$$ \pi_s^T: a \in A \mapsto \frac{1}{2} \left(a - s a s\right)  $$
and 
$$ \pi_s^N: a \in A \mapsto \frac{1}{2} \left(a + s a s\right)$$
Let us now give an easy but useful lemma:
\begin{Lemma} \label{Lemma-pi-split-1}
	$\forall s \in \mathbf{S}(A),$ 
	\begin{itemize}
		\item $\pi_s^T \in \mathbf{P}(L(A))$ and $\pi_s^N \in \mathbf{P}(L(A))$ 
		\item $Im (\pi_s^T) = Ker (\pi_s^N)$ and $Im (\pi_s^N) = Ker (\pi_s^T)$
		\item $Im (\pi_s^T) = Ker AS(s)$
		\item $A$ is isomorphic, as a diffeological vector space, with  $$Im (\pi_s^T) \oplus Im (\pi_s^N)$$ where each component is equipped with the subset diffeology. In other words, each $a\in A$ splits uniquely into $a = a_s^T + a_s^N,$ with  $ a_s^T \in Im (\pi_s^T),$ $ a_s^N \in Im (\pi_s^N),$ and the mapping $a \mapsto (a_s^T,a_s^N) = \left(\pi_s^T(a),\pi_s^N(a)\right)$ is a diffeomorphism.
	\end{itemize} 
\end{Lemma}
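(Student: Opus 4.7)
The plan is to exploit only two ingredients throughout: the relation $s^2 = 1_A$, and the smoothness of multiplication in the diffeological algebra $A$, which automatically lifts to smoothness of $a \mapsto sas$ as an element of $L(A)$. For the first bullet I would compute
\[
(\pi_s^T)^2(a) = \tfrac14\bigl(a - sas - sas + s^2 a s^2\bigr) = \tfrac12(a - sas) = \pi_s^T(a)
\]
using $s^2 = 1_A$, with a symmetric computation for $\pi_s^N$; smoothness then places both maps in $\mathbf{P}(L(A))$. For the second bullet, the identity $\pi_s^T + \pi_s^N = Id_A$, visible directly from the definitions, gives $\pi_s^N(a)=0 \Leftrightarrow \pi_s^T(a)=a$, which by idempotence of $\pi_s^T$ is equivalent to $a \in Im(\pi_s^T)$; the other equality is symmetric.

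For the third bullet I would rewrite $\pi_s^T(a) = a$ as $sas = -a$, then left-multiply by $s$ and apply $s^2 = 1_A$ to obtain $as = -sa$, i.e.\ $AS(s)(a) = 0$; each step is reversible, so $Im(\pi_s^T) = Ker\,AS(s)$.

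For the fourth bullet, the algebraic splitting $a = \pi_s^T(a) + \pi_s^N(a)$ is immediate from $\pi_s^T + \pi_s^N = Id_A$, and uniqueness follows from $Im(\pi_s^T) \cap Im(\pi_s^N) = \{0\}$, itself a consequence of the second bullet. Smoothness of $a \mapsto (\pi_s^T(a),\pi_s^N(a))$ is clear from smoothness of the two projectors combined with Proposition \ref{prod1}, and the inverse $(b,c) \mapsto b+c$ is smooth as a restriction of the addition on $A$. The delicate point, and what I expect to be the main obstacle, is the interaction with the subset diffeologies on the two summands: one must verify that the trace diffeology on $Im(\pi_s^T)$ coincides with the push-forward of $\p_A$ by the retraction $\pi_s^T$, so that a plot of $Im(\pi_s^T) \oplus Im(\pi_s^N)$ equipped with trace diffeologies genuinely decomposes as, and reassembles into, a plot of $A$. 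This works because $\pi_s^T$ is a smooth retraction onto its image: any plot of $A$ landing in $Im(\pi_s^T)$ is tautologically a plot of the trace diffeology, and conversely a plot of the trace diffeology is a plot of $A$ by definition of the subset diffeology, so the two structures on each summand agree and the direct sum decomposition is diffeological.
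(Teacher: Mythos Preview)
Your proposal is correct and follows essentially the same approach as the paper, which simply states that the algebraic parts are elementary and well-established, and that the diffeological parts follow from the smoothness of scalar multiplication, addition and multiplication in $A$. In fact, your treatment is more detailed: you spell out the idempotence computation, the kernel-image argument, and you raise and resolve the compatibility of the subset diffeology on each summand with the product diffeology, a point the paper leaves entirely implicit.
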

\begin{proof}
	In the results that have to be proven, all the algebraic parts are elementary and well-established. 
	
	For the diffeological parts, it follows from the smoothness of scalar multiplication, addition and multiplication in $A.$
\end{proof}
Let us now analyze the interaction between the splitting $a = a_s^T + a_s^N$ with the multiplication. 
We calculate, for $s\in \mathbf{S}(A)$ and for $(a,b) \in A^2,$
\begin{eqnarray*}
	a_s^T b_s^T  & = & \frac{1}{4}\left(a - s a s\right)\left(b - s b s\right)\\
	& = & \frac{1}{2}\pi_s^N(ab) - \frac{1}{4}\left(sasb+asbs\right),
	\end{eqnarray*}
\begin{eqnarray*}
	a_s^N b_s^N  & = & \frac{1}{4}\left(a + s a s\right)\left(b + s b s\right)\\
	& = & \frac{1}{2}\pi_s^N(ab) + \frac{1}{4}\left(sasb+asbs\right),
\end{eqnarray*}
\begin{eqnarray*}
	a_s^T b_s^N  & = & \frac{1}{4}\left(a - s a s\right)\left(b + s b s\right)\\
	& = & \frac{1}{2}\pi_s^T(ab) - \frac{1}{4}\left(sasb-asbs\right),
\end{eqnarray*}
\begin{eqnarray*}
	a_s^N b_s^T  & = & \frac{1}{4}\left(a + s a s\right)\left(b - s b s\right)\\
	& = & \frac{1}{2}\pi_s^T(ab) + \frac{1}{4}\left(sasb-asbs\right).
\end{eqnarray*}

	These are all intermediate results which have as a consequence that $ab = (ab)_s^T + (ab)_s^N$ but we can now precise a ``polarization like'' property which is a straightforward consequence of our computations:
	
	\begin{Lemma}
		Let $s\in \mathbf{S}(A)$ and let $(a,b) \in A^2.$
		$$ (ab)_s^T = a_s^T b_s^N + a_s^N b_s^T$$
		and 
		$$(ab)_s^N = a_s^T b_s^T + a_s^N b_s^N.$$
	\end{Lemma}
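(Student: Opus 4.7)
The plan is to simply add the four displayed formulas that have just been computed immediately before the statement. Those computations already express each of the four ``mixed'' products $a_s^T b_s^T,$ $a_s^N b_s^N,$ $a_s^T b_s^N,$ $a_s^N b_s^T$ as a half of a projection ($\pi_s^T(ab)$ or $\pi_s^N(ab)$) plus or minus a quarter of the correction term $(sasb \pm asbs).$ The correction terms come in opposite signs within each of the two relevant pairs, so they cancel on summation, leaving exactly $\pi_s^T(ab)$ or $\pi_s^N(ab).$

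Concretely, the first step is to write down $a_s^T b_s^N + a_s^N b_s^T,$ substitute the two precomputed expressions, observe that the two $-\frac{1}{4}(sasb-asbs)$ and $+\frac{1}{4}(sasb-asbs)$ terms cancel, and conclude that the sum equals $\pi_s^T(ab),$ which is by definition $(ab)_s^T.$ The second step is identical with $T$ and $N$ interchanged: summing $a_s^T b_s^T + a_s^N b_s^N$ makes the $\pm\frac{1}{4}(sasb+asbs)$ terms cancel and yields $\pi_s^N(ab) = (ab)_s^N.$ Uniqueness of the splitting $a = a_s^T + a_s^N$ from Lemma \ref{Lemma-pi-split-1} then identifies the two summands on the right with the tangential and normal parts of $ab$ unambiguously.

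There is no real obstacle here: the entire content is contained in the algebraic expansions already carried out, and the lemma is indeed advertised in the text as ``a straightforward consequence of our computations''. The only thing worth emphasizing in the proof is that one should not mix up which pair of mixed products corresponds to the tangential part and which to the normal part; this is enforced by the signs of the residual quadratic term $sasb \pm asbs,$ whose vanishing upon summation dictates whether $\pi_s^T(ab)$ or $\pi_s^N(ab)$ survives.
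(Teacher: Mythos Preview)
Your proposal is correct and matches the paper's approach exactly: the lemma is stated there as ``a straightforward consequence of our computations,'' and your plan of adding the two relevant precomputed expressions so that the $\pm\frac{1}{4}(sasb\pm asbs)$ terms cancel is precisely that straightforward consequence. Nothing is missing.
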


Next theorem follows straightway from our preliminary constructions.
\begin{Theorem}
	for each $s \in \mathbf{S}(A),$, we define a $\mathbb{Z}_2$-grading on $A = Im (\pi_s^T) \oplus Im (\pi_s^N)$ by assigning $deg_s(a_s^N) = 1$ and $deg_s(a_s^T)=-1,$ and the algebra $A$ reads as a diffeological $\mathbb{Z}_2-$graded algebra with respect to the decomposition $A = Im (\pi_s^N) \oplus Im (\pi_s^T).$ 
	
	Moreover, the map $$(a,s)\in A \times \mathbf{S}(A) \mapsto (a_s^N, a_s^T) \in A^2$$ is smooth. 
\end{Theorem}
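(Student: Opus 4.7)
The plan is to split the theorem into its two distinct assertions: the algebraic claim that $A = Im(\pi_s^N) \oplus Im(\pi_s^T)$ carries a $\mathbb{Z}_2$-grading compatible with multiplication, and the diffeological claim that the decomposition map $(a,s) \mapsto (a_s^N, a_s^T)$ is smooth jointly in both arguments. Both parts reduce directly to the preceding lemmas combined with the diffeological algebra structure of $A.$

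For the grading, I would exploit the polarization-type identities from the previous Lemma, namely $(ab)_s^T = a_s^T b_s^N + a_s^N b_s^T$ and $(ab)_s^N = a_s^T b_s^T + a_s^N b_s^N.$ Noting that $Im(\pi_s^N) = \{a \in A \mid a_s^T = 0\}$ and $Im(\pi_s^T) = \{a \in A \mid a_s^N = 0\},$ a short case analysis on the four products $N\cdot N,\, N\cdot T,\, T\cdot N,\, T\cdot T$ yields $Im(\pi_s^N) \cdot Im(\pi_s^N) \subset Im(\pi_s^N),$ that both $Im(\pi_s^N) \cdot Im(\pi_s^T)$ and $Im(\pi_s^T) \cdot Im(\pi_s^N)$ lie in $Im(\pi_s^T),$ and that $Im(\pi_s^T) \cdot Im(\pi_s^T) \subset Im(\pi_s^N).$ Under the multiplicative $\{\pm 1\}$-realization of $\mathbb{Z}_2,$ this is precisely the compatibility of the product with the assigned degrees $deg_s(a_s^N) = +1$ and $deg_s(a_s^T) = -1.$ The diffeological side of the grading, namely that $A$ splits as a direct sum of diffeological vector subspaces, is already granted by the last item of Lemma \ref{Lemma-pi-split-1}.

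For smoothness of $(a,s) \mapsto (a_s^N, a_s^T),$ I would work directly from the explicit formulas $a_s^N = \frac{1}{2}(a + sas)$ and $a_s^T = \frac{1}{2}(a - sas).$ These are polynomial expressions in $a$ and $s$ built from the addition, scalar multiplication and inner multiplication of $A,$ all of which are smooth by definition of a diffeological algebra, together with the smooth inclusion $\mathbf{S}(A) \hookrightarrow A$ granted by the subset diffeology. Using the product diffeology on $A \times \mathbf{S}(A)$ recalled in Proposition \ref{prod1}, the map is therefore smooth as a composition of smooth maps into $A \times A.$

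I expect no serious obstacle: every ingredient is an immediate consequence of earlier results. The only point deserving a moment of care is to interpret the $\mathbb{Z}_2$-grading in its multiplicative $\{\pm 1\}$ incarnation rather than the additive $\{0,1\}$ one, so that $a_s^N$ plays the role of the even component and $a_s^T$ that of the odd component; once this convention is fixed, the verification of compatibility with multiplication is just an inspection of the polarization formulas on each pair of homogeneous elements.
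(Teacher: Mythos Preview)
Your proposal is correct and follows essentially the same route as the paper, which simply states that the theorem ``follows straightway from our preliminary constructions.'' You have made explicit precisely those constructions: the polarization identities $(ab)_s^T = a_s^T b_s^N + a_s^N b_s^T$ and $(ab)_s^N = a_s^T b_s^T + a_s^N b_s^N$ for the grading, and the explicit polynomial formulas $a_s^{N,T} = \tfrac{1}{2}(a \pm sas)$ together with the smoothness of the algebra operations for the diffeological claim.
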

	
\begin{remark}
	$\forall s \in \mathbf{S}(A), s \in Im (\pi_s^N),$ $Id_A \in Im (\pi_s^N)$ and $ p = \frac{1}{2}(s+ Id_A) \in Im (\pi_s^N).$
\end{remark}
Let us now investigate the following problem: let $a \in A.$ For which $s \in \mathbf{S}(A)$ do we have 
$a \in Im\left(\pi_s^N\right)?$ or $a \in Im\left(\pi_s^N\right)?$

\begin{Proposition}
	Let $a \in A.$ Then $a \in Im\left(\pi_s^T\right)$ if and only if
	\begin{equation}
	\label{eq:tangent}
	(a+s)^2 = a^2 + Id_A
	\end{equation}
\end{Proposition}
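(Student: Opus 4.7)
The plan is to reduce the claim to the identity $Im(\pi_s^T) = Ker\, AS(s)$ already established in Lemma \ref{Lemma-pi-split-1}, and then to show by direct expansion that $(a+s)^2 = a^2 + Id_A$ is exactly the equation $AS(s)(a) = 0$.

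More concretely, I would first expand the left-hand side of \eqref{eq:tangent}. Since $A$ is associative and $s \in \mathbf{S}(A)$ satisfies $s^2 = Id_A$, we have
\[
(a+s)^2 \;=\; a^2 + as + sa + s^2 \;=\; a^2 + (as + sa) + Id_A.
\]
Hence \eqref{eq:tangent} is equivalent to $as + sa = 0$, i.e.\ to $AS(s)(a) = 0$, which is exactly the condition $a \in Ker\, AS(s)$.

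Next, I would invoke Lemma \ref{Lemma-pi-split-1}, which asserts in particular that $Im(\pi_s^T) = Ker\, AS(s)$. Combining this with the previous step yields the desired equivalence: $a \in Im(\pi_s^T)$ if and only if $as + sa = 0$ if and only if $(a+s)^2 = a^2 + Id_A$.

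There is no real obstacle here; everything is algebraic in the associative unital algebra $A$, uses only $s^2 = Id_A$, and rests on a previously proved identification of $Im(\pi_s^T)$. No diffeological considerations are needed for this particular statement, which is a pointwise algebraic characterization. The only point meriting a line of care is to note that the direct computation can be read in both directions, so that the equivalence (not merely one implication) is obtained without further work.
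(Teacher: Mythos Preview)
Your proof is correct and is essentially the same as the paper's: both reduce the equivalence to the identity $as+sa=0$ via the expansion $(a+s)^2 = a^2 + as + sa + Id_A$, and both invoke Lemma~\ref{Lemma-pi-split-1} to identify $Im(\pi_s^T)$ with the kernel of the anticommutator (the paper phrases it via $Ker(\pi_s^N)$, you via $Ker\,AS(s)$, but these are the same by that lemma).
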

\begin{proof} $a \in Im\left(\pi_s^T\right) \leftrightarrow a \in Ker\left(\pi_s^N\right) \leftrightarrow as + sa =0. $ 
	Then expanding
	\begin{eqnarray*}
		(a+s)^2 - a^2 - Id & = & as + sa
		\end{eqnarray*}
	we get the result.
	\end{proof}
\begin{Proposition}
	Let $a \in A.$ Then $a \in Im\left(\pi_s^N\right)$ if and only if $$[a,s]=0$$
	which is equivalent to 
	\begin{equation}
	\label{eq:normal}
	(a+s)(a-s) = a^2 - Id_A
	\end{equation}
	or 
	\begin{equation}
	\label{eq:normal-2}
	(a+s)(a-s) = \left(a - Id_A\right)\left(a + Id_A\right)
	\end{equation}
\end{Proposition}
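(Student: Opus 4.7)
The approach is straightforward algebraic manipulation using the idempotence of $\pi_s^N$ established in Lemma \ref{Lemma-pi-split-1} together with the defining identity $s^2 = Id_A$.

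First, since $\pi_s^N$ is a projector on $A$ (acting via $a \mapsto \frac{1}{2}(a + sas)$), the standard fact $Im(\pi_s^N) = \{a \in A \mid \pi_s^N(a) = a\}$ applies. Thus I would begin by rewriting the condition $a \in Im(\pi_s^N)$ as $\frac{1}{2}(a + sas) = a$, equivalently $sas = a$. Multiplying on the left by $s$ and using $s^2 = Id_A$ converts this to $as = sa$, i.e. $[a,s]=0$. Conversely, if $[a,s]=0$, then $sas = s(sa) = s^2 a = a$, which gives $\pi_s^N(a)=a$. This establishes the first equivalence.

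For the equivalence with \eqref{eq:normal}, the plan is to expand
\begin{equation*}
(a+s)(a-s) = a^2 - as + sa - s^2 = a^2 - Id_A + (sa - as),
\end{equation*}
so that $(a+s)(a-s) = a^2 - Id_A$ holds if and only if $sa - as = 0$, i.e. if and only if $[a,s]=0$.

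Finally, for \eqref{eq:normal-2}, I only need to observe that $Id_A$ is central, so
\begin{equation*}
(a - Id_A)(a + Id_A) = a^2 + a - a - Id_A = a^2 - Id_A,
\end{equation*}
making \eqref{eq:normal} and \eqref{eq:normal-2} literally the same relation. No step here is an obstacle; the only subtlety is to notice that the idempotence of $\pi_s^N$, not just the definition, is what gives the ``if and only if'' cleanly, which is why I would state it explicitly at the start.
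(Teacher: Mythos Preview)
Your proof is correct and follows essentially the same route as the paper: both reduce $a \in Im(\pi_s^N)$ to $sas = a$ (you via the fixed-point characterization of a projector's image, the paper via $Im(\pi_s^N)=Ker(\pi_s^T)$ from Lemma~\ref{Lemma-pi-split-1}), then expand $(a+s)(a-s)$ using $s^2=Id_A$ and observe that $Id_A$ is central for \eqref{eq:normal-2}. Your expansion is in fact more carefully written than the paper's, which contains a sign slip in the displayed computation.
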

\begin{proof} $a \in Im\left(\pi_s^N\right) \leftrightarrow a \in Ker\left(\pi_s^T\right) \leftrightarrow as - sa =0. $ 
	Then expanding
	\begin{eqnarray*}
		(a+s)(a-s) - a^2 + Id_A & = & as + sa
	\end{eqnarray*}
	we get the result, since $[a,Id_A]=0 \Leftrightarrow a^2-Id_A = (a-Id_A)(a+Id_A).$
\end{proof}
\subsection{From splittings to Grassmannians and to projections and non-commutative cross-ratio} \label{s:3.2}
\begin{Definition}
	The space of (diffeological) splittings of $A$ is the space 
		$$\mathbf{Spl}(A) = \left\{ (V,W) \subset A^2 \, | \, \exists s \in \mathbf{S}(A), \, V = Ker(s-Id) \wedge W = Ker(s+Id)\right\}.$$
		\end{Definition}
	\begin{Proposition}
		We have equivalently
		$$\mathbf{Spl}(A) = \left\{ (V,W) \subset A^2 \, | \, \exists p \in \mathbf{P}(A), \, V = Im(p) \wedge W = Ker(p)\right\}.$$
		We equip $\mathbf{Spl}(A)$ with its push-forward diffeology $\p_\mathbf{Spl} = (Im \times Ker)_* \p_L.$
\end{Proposition}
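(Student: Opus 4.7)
The plan is to establish the claimed set-theoretic equality by invoking the diffeomorphism $p \mapsto 2p - 1_A$ between $\mathbf{P}(A)$ and $\mathbf{S}(A)$ proven in the preceding proposition, and to observe that the diffeology clause is purely a definition. The content reduces to a short algebraic unpacking, so I do not expect any substantive obstacle.

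First, I would fix $p \in \mathbf{P}(A)$ and set $s = 2p - 1_A$. A one-line calculation yields $s - Id_A = 2(p - Id_A)$ and $s + Id_A = 2p$, so the kernels transform as $Ker(s - Id_A) = Ker(p - Id_A)$ and $Ker(s + Id_A) = Ker(p)$. I would then apply the standard identification $Ker(p - Id_A) = Im(p)$, valid for any idempotent: if $p(x) = x$ then $x \in Im(p)$, and conversely $x = p(y)$ forces $p(x) = p^2(y) = p(y) = x$. Consequently the pair $(Ker(s-Id_A), Ker(s+Id_A))$ built from $s$ coincides with the pair $(Im(p), Ker(p))$ built from $p$. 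Running the same argument backwards along the inverse $s \mapsto \frac{1}{2}(s + 1_A)$ yields the reverse inclusion, hence the set equality of the two presentations of $\mathbf{Spl}(A)$.

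For the diffeological clause $\p_{\mathbf{Spl}} = (Im \times Ker)_* \p_L$, I would only remark that the right-hand side is well-defined as soon as $Im \times Ker$ is a set map out of $\mathbf{P}(A)$, which is automatic. As a consistency observation I would add that, through the diffeomorphism $p \leftrightarrow 2p - 1_A$, pushing forward along $(Im \times Ker)$ from $\mathbf{P}(A)$ agrees with pushing forward along $(Ker(\cdot - Id_A), Ker(\cdot + Id_A))$ from $\mathbf{S}(A)$, so the choice of the projection parametrization as reference is immaterial and the diffeology is canonical.
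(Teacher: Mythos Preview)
Your argument is correct and is exactly the natural one: the paper does not actually supply a proof of this proposition, treating it as an immediate consequence of the diffeomorphism $p \mapsto 2p - 1_A$ between $\mathbf{P}(A)$ and $\mathbf{S}(A)$ established just before. Your explicit unpacking of $Ker(s-Id_A)=Im(p)$ and $Ker(s+Id_A)=Ker(p)$ via $s-Id_A = 2(p-Id_A)$ and $s+Id_A = 2p$ fills in precisely the routine verification that the paper leaves to the reader, and your remark that the diffeology clause is definitional is accurate.
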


We define the map $J_{(-)}: a \in A \mapsto -a \in A.$ 
Here are few easy properties:
\begin{Proposition}
	\begin{enumerate}
		\item The map $Im \times Ker: \mathbf{P}(A) \rightarrow \mathbf{Spl}(A)$ is a diffeomorphism, and induces a diffeomorphism $Pol:   \mathbf{S}(A) \rightarrow \mathbf{Spl}(A)$.
		\item $\forall (V,W) \in \mathbf{Spl}(A),$ $$ J_{(-)}^{\mathbf{Spl}} (V,W) = Pol \circ J_{(-) }\circ Pol^{-1}(V,W)=(W,V).$$ This enables to define a smooth involution on $\mathbf{Spl}(A)$ that we also note by $J_{(-)}$ instead of $J_{(-)}^{\mathbf{Spl}}$ when it carries no ambiguity.
	\end{enumerate}
\end{Proposition}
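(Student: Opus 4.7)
The plan is to handle the two items in sequence: first establish that $Im \times Ker$ is a diffeomorphism and transport this along $p \mapsto 2p - 1_A$ to obtain $Pol$, and then read off the swap formula by an elementary algebraic computation.

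For item (1), surjectivity of $Im \times Ker$ onto $\mathbf{Spl}(A)$ is immediate from the very definition of $\mathbf{Spl}(A)$, and smoothness is built into the push-forward diffeology on $\mathbf{Spl}(A)$. The core step is injectivity: if two projectors $p, q \in \mathbf{P}(A)$ share the same image $V$ and kernel $W$, then for every $a \in A$ the element $a$ admits a unique decomposition in $V \oplus W$, so the equalities $a = pa + (1_A - p)a = qa + (1_A - q)a$ force $pa = qa$; varying $a$ gives $p = q$ as elements of $A$. From bijectivity combined with the push-forward structure, the inverse is automatically smooth: any plot of $\p_\mathbf{Spl}$ is, locally, of the form $(Im \times Ker) \circ \sigma$ for a plot $\sigma$ valued in $\mathbf{P}(A)$, and post-composing with the set-theoretic inverse returns $\sigma$ locally. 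Composing with the diffeomorphism $\mathbf{P}(A) \to \mathbf{S}(A)$, $p \mapsto 2p - 1_A$ established just above (or rather its inverse $s \mapsto \tfrac{1}{2}(s + 1_A)$) yields the induced diffeomorphism $Pol: \mathbf{S}(A) \to \mathbf{Spl}(A)$.

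For item (2), the formula is a direct computation. If $(V, W) = Pol(s)$ with $s \in \mathbf{S}(A)$, then by definition $V = Ker(s - 1_A)$ and $W = Ker(s + 1_A)$. Applying $J_{(-)}$ on $A$ sends $s$ to $-s \in \mathbf{S}(A)$, and
\begin{equation*}
Pol(-s) = \bigl(Ker(-s - 1_A),\, Ker(-s + 1_A)\bigr) = \bigl(Ker(s + 1_A),\, Ker(s - 1_A)\bigr) = (W, V),
\end{equation*}
which is precisely $Pol \circ J_{(-)} \circ Pol^{-1}(V, W)$. Smoothness of the induced involution $J_{(-)}^{\mathbf{Spl}}$ on $\mathbf{Spl}(A)$ is then automatic: scalar negation on $A$ is smooth and restricts smoothly to $\mathbf{S}(A)$, while $Pol$ is a diffeomorphism by item (1).

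The main technical point to watch is the lifting argument used for smoothness of $(Im \times Ker)^{-1}$: one should note that injectivity of $Im \times Ker$ makes the local factorizations of a plot of $\p_\mathbf{Spl}$ \emph{unique}, so the locally defined lifts glue coherently into a global plot valued in $\mathbf{P}(A)$. Everything else reduces to the smoothness of the algebra operations in $A$ and to the already established correspondence $p \leftrightarrow 2p - 1_A$.
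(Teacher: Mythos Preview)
Your proof is correct and complete. The paper itself does not supply a proof of this proposition: it introduces it with the phrase ``Here are few easy properties'' and moves on directly, so there is no argument to compare against. Your write-up fills in exactly the details the paper leaves implicit---injectivity of $Im \times Ker$ via the uniqueness of the $V \oplus W$ decomposition, smoothness of the inverse via the local-lift description of push-forward plots together with injectivity to guarantee coherent gluing, and the elementary kernel computation for the swap formula in item (2)---all in line with how the paper treats neighboring results (reducing everything to smoothness of the algebra operations and the already established bijection $p \leftrightarrow 2p - 1_A$).
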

The map $Pol$ is such named because it is often called Polarization map in the literature. Therefore, let us analyze the non-commutative cross-ratio from \cite{GGRW2005,Ze2006} which algebraic aspects are refreshed in \cite{RRS2020}. We recall its definition along the lines of our notations, and both extend and complete the statements which are, to our opinion, a little too fastly stated in \cite{RRS2020,Ze2006}, or at least, developed in a too reductive setting.

\begin{Definition}
	Let $((V_1,W_1),(V_2,W_2)) \in \mathbf{Spl}(A)^2$ such that $((V_1,W_2),(V_2,W_1)) \in \mathbf{Spl}(A)^2.$ We call such two splittings an {\bf admissible couple of polarizations} or {\bf admissible couple of splittings}. We denote by $Adm(V)$ the set of admissible couples of splittings. Then we define on $Adm(V)$  the map $DV$ with values in $L(V_1)$ defined by $$DV(V_1,W_1,V_2,W_2) = (Im \times Ker)^{-1}(V_1,W_2) \circ (Im \times Ker)^{-1}(V_2,W_1)|_{V_1}.$$
\end{Definition}

  \begin{remark} \label{rem:isom}
  This definition corresponds to the composition diagram $$ V_1 \xrightarrow[]{W_1} V_2 \xrightarrow[]{W_2} V_1$$ present in \cite{RRS2020,Ze2006}. Comparing our statement to those present in \cite{RRS2020,Ze2006}, the assumption $((V_1,W_2),(V_2,W_1)) \in \mathbf{Spl}(A)^2$ replaces the existence of a fixed isomorphism $\psi  \in GL(V_1,W_1)$ which is not a priori necessary at this step. Therefore, our construction can be understood as a tentative of generalization of the setting described \cite{RRS2020,Ze2006} even if we are not sure that this is strictly speaking a ``generalization'', but maybe only an ``adaptation''.
Let us now give more comments on the assumptions on $(V_1,W_1)$ and $(V_2,W_2)$ that one can find in \cite{RRS2020,Ze2006}. The assumption of isomorphism between $V_i$ and $W_i$, for $i \in \{1,2\},$ is not assumed here. However, we can check that the condition $((V_1,W_1),(V_2,W_2)) \in Adm(V),$ the diffeological vector spaces $V_1$ and $V_2$ are isomorphic. For this, we note by $p_{(V,W)}$ the projection on $V$ parallel to $W,$ with $(V,W) \in \mathbf{Spl}(A).$ Indeed, given $x_1 \in V_1$ and $x \in \pi_{(V_1,W_1)}^{-1} (x_1),$ this is an easy exercise to check that $ x_2 = p_{(V_2,W_1)}(x)$ only depends on $x_1$ and that $p_{(V_1,W_1)}(x_2)=x_1.$ We can state a bit quickly that $ \left( p_{(V_2,W_1)} |_{V_1} \right)^{-1} = p_{(V_1,W_1)}|_{V_2}.$ Therefore, our construction on admissible couples of splittings only implies that $V_1$ and $V_2$ are isomorphic, and that $W_1$ and $W_2$ are also isomorphic.  
\end{remark}
 	We have the following:
 	\begin{Proposition} \label{dv-inverse}
 	The map $DV$ is a smooth $GL(V_1)-$valued map, and its inverse reads as
 	$$DV(V_1,W_1,V_2,W_2)^{-1} = DV(V_1,W_2,V_2,W_1).$$
 	\end{Proposition}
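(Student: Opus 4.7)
The strategy is to regard $DV(V_1,W_1,V_2,W_2)$ as a composition of two restricted projections between $V_1$ and $V_2$, invert each factor via Remark \ref{rem:isom}, and recognize the result as $DV$ with the roles of $W_1$ and $W_2$ swapped. Writing $p_{(V,W)}$ for the projection onto $V$ parallel to $W$, the defining formula factorizes as
$$DV(V_1,W_1,V_2,W_2) \;=\; \bigl(p_{(V_1,W_2)}\big|_{V_2}\bigr) \circ \bigl(p_{(V_2,W_1)}\big|_{V_1}\bigr),$$
a composition of linear maps $V_1 \to V_2 \to V_1$. The admissibility hypothesis guarantees that all four pairings $(V_i,W_j)$ are splittings of $A$, so each restricted projection is well-defined.

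By Remark \ref{rem:isom}, applied once with shared component $W_1$ and once with shared component $W_2$, each factor is bijective with explicit inverse
$$\bigl(p_{(V_2,W_1)}\big|_{V_1}\bigr)^{-1} = p_{(V_1,W_1)}\big|_{V_2}, \qquad \bigl(p_{(V_1,W_2)}\big|_{V_2}\bigr)^{-1} = p_{(V_2,W_2)}\big|_{V_1}.$$
Composing the inverses in reversed order,
$$DV(V_1,W_1,V_2,W_2)^{-1} \;=\; p_{(V_1,W_1)}\big|_{V_2} \circ p_{(V_2,W_2)}\big|_{V_1} \;=\; DV(V_1,W_2,V_2,W_1),$$
which establishes both invertibility and the inverse formula; in particular $DV$ takes values in $GL(V_1)$.

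For smoothness I would trace through the diffeological constructions. Since $Pol: \mathbf{S}(A) \to \mathbf{Spl}(A)$ is a diffeomorphism, any plot of $Adm(V) \subset \mathbf{Spl}(A)^2$ lifts locally to a pair of smooth curves of projections $p_1,p_2$ in $A$; their product lies in $A$ by smoothness of the algebra operations, and restriction to the moving subspace $V_1$ is smooth in the functional diffeology of Proposition \ref{cvar}. Symmetry in $W_1 \leftrightarrow W_2$ shows that the pointwise inverse, being again a $DV$-map, is smooth by the same argument, so $DV$ is smooth for the $\p^{(-1)}$-type diffeology adopted on linear groups in Section \ref{ss:l(v)}. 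I expect the main subtlety to be that both the source $V_1$ and the codomain $L(V_1)$ vary with the argument, so the cleanest formulation realizes $DV$ as a smooth section of the bundle $\coprod L(V_1)$ over $Adm(V)$; on individual plots the verification reduces to the algebraic calculation above.
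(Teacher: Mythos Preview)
Your proof is correct and follows the same route as the paper: both factor $DV$ as the composition $V_1 \xrightarrow{W_1} V_2 \xrightarrow{W_2} V_1$ of two restricted projections, invert each piece, and recognize the outcome as $DV(V_1,W_2,V_2,W_1)$. Your write-up is in fact more explicit than the paper's, which simply observes that the diagram $V_1 \xrightarrow{W_2} V_2 \xrightarrow{W_1} V_1$ yields the inverse and defers smoothness to ``the smoothness of the elementary operations''; your invocation of Remark~\ref{rem:isom} for each factor and your more careful discussion of the diffeological smoothness (including the subtlety that $V_1$ itself varies) add detail the paper leaves implicit.
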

 \begin{proof}
 	 Considering the diagram
 	$$ V_1 \xrightarrow[]{W_2} V_2 \xrightarrow[]{W_1} V_1$$ (which is possible under our assumptions), we see that we construct the inverse of the map $DV(V_1,W_1,V_2,W_2).$ The smoothness properties follow from the smoothness of the elementary operations that we use in this construction.
 \end{proof}

\begin{remark}
	The set of admissible couples of splittings, as it is defined here, is stable under the action of $J_{(-)}$ which is a smooth map on $Adm(A)$.
\end{remark}

Let us equip the space of admissible polarizations with its subset diffeology inherited from its inclusion in $\mathbf{Spl}(A)^2.$
\begin{Definition}
	The Grassmanian on $A$ is the space 
	$$\mathbf{Gr}(A) = \left\{ V \subset A \, | \, \exists s \in \mathbf{S}(A), \, V = Im(s)\right\}.$$
	We equip $\mathbf{Gr}(A)$ with its push-forward diffeology $\p_\mathbf{Gr} = Im_* \p_L.$
\end{Definition}
Denoting $\pi_i$ the canonical projection on the $i-$th component of $\mathbf{Gr}(A)^2$ with $i \in \{1;2\},$ and denoting the same way the restriction of the projection maps to $\mathbf{Spl}(A),$ we have the following commutative diagram:

\begin{center}
\begin{tikzpicture}
\node (spl) at (-1,1) {$\mathbf{Spl}(A)$};
\node (gr2) at (3,1) {$\mathbf{Gr}(A)^2$};
\node (gr) at (1,-1) {$\mathbf{Gr}(A)$};
\draw[->,>=latex,] (spl) -- (gr2); 
\draw[->,>=latex,] (spl) -- (gr) node[pos=0.5,right,below]{$\pi_i$};
\draw[->,>=latex,] (gr2) -- (gr) node[pos=0.5,left,below]{$\pi_i$}; 
\end{tikzpicture} 
\end{center}
where the horizontal arrow is the canonical inclusion map. 
\subsection{Yet another involutions on $Adm(V).$}
Let us now analyze the maps $$\tilde J: ((V_1,W_1),(V_2,W_2)) \in Adm(V) \mapsto ((V_1,W_2),(V_2,W_1))$$
and $$J_{sw} : ((V_1,W_1),(V_2,W_2)) \in Adm(V) \mapsto ((V_2,W_2),(V_1,W_1))$$
\begin{Proposition}
The maps $\tilde J$ and $J_{sw}$ are smooth on $Adm(V)$ with values in $Adm(V).$ 
\end{Proposition}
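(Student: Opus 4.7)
The plan is twofold: first, verify that $\tilde J$ and $J_{sw}$ indeed take values in $Adm(V)$; second, establish their smoothness with respect to the subset diffeology of $Adm(V)\subset\mathbf{Spl}(A)^2.$

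Well-definedness exploits the symmetric role of the two tuples in the admissibility condition. Membership of $((V_1,W_1),(V_2,W_2))$ in $Adm(V)$ means that this tuple \emph{and} its crossed partner $((V_1,W_2),(V_2,W_1))$ both lie in $\mathbf{Spl}(A)^2.$ The map $\tilde J$ simply exchanges a tuple with its crossed partner, so it is visibly an involution of $Adm(V).$ For $J_{sw},$ the image $((V_2,W_2),(V_1,W_1))$ has crossed partner $((V_2,W_1),(V_1,W_2)),$ which is a coordinate reversal of the admissible crossed partner and therefore also lies in $\mathbf{Spl}(A)^2,$ so $J_{sw}$ is likewise an involution of $Adm(V).$

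For smoothness, $J_{sw}$ is immediate: it is the restriction to $Adm(V)$ of the coordinate swap $(a,b)\mapsto(b,a)$ on $\mathbf{Spl}(A)^2,$ which is smooth for the product diffeology (Proposition \ref{prod1}), and smoothness passes to the subset diffeology on $Adm(V).$ For $\tilde J,$ I would use the pushforward description of $\p_{\mathbf{Spl}}:$ a plot $\phi$ of $Adm(V)$ reads locally as $\phi(u)=((Im\,p_1(u),Ker\,p_1(u)),(Im\,p_2(u),Ker\,p_2(u)))$ with $p_1,p_2$ plots of $\mathbf{P}(A).$ Pointwise admissibility furnishes projections $q_1(u),q_2(u)$ onto the crossed images along the crossed kernels, characterized by algebraic identities such as $q_1 p_1=p_1,$ $p_1 q_1=q_1,$ $q_1 p_2=q_1,$ $p_2 q_1=p_2.$ These relations let one express $q_i(u)$ as an algebraic combination of $p_1(u)$ and $p_2(u)$ involving the inverse of an element of $A$ whose invertibility is precisely what admissibility provides; smoothness of inversion in $(A^*,\p_{A^*}^{(-1)})$ from Section \ref{ss:diff} then makes $u\mapsto q_i(u)$ a plot of $\mathbf{P}(A),$ so $\tilde J\circ\phi$ is a plot of $\mathbf{Spl}(A)^2$ landing in $Adm(V).$

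The main obstacle lies in this last step: producing the explicit algebraic formula for the crossed projection and pinpointing the unit being inverted, so that the smooth inversion in the group of units of $A$ can be invoked. This is precisely the same smoothness input that underlies the proof of Proposition \ref{dv-inverse}, where the smoothness of $DV$ is justified briefly by ``the smoothness of the elementary operations'' in its construction.
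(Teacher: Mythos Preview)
Your treatment of well-definedness and of $J_{sw}$ coincides with the paper's: both argue that the admissibility condition is symmetric under the relevant permutations, and both obtain smoothness of $J_{sw}$ from the coordinate swap on $\mathbf{S}(A)^2$ (equivalently $\mathbf{Spl}(A)^2$) via the diffeomorphism $Pol$.

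For $\tilde J$ the routes diverge. The paper dispenses with the question in one line: it observes that the diffeology on $Adm(V)$ is the subset diffeology inherited from $\mathbf{Gr}(A)^4$, so that $\tilde J$ is simply the restriction of the coordinate permutation $(V_1,W_1,V_2,W_2)\mapsto(V_1,W_2,V_2,W_1)$ on the fourfold product, hence smooth. Your approach instead stays at the level of $\mathbf{Spl}(A)^2\cong\mathbf{P}(A)^2$ and seeks to manufacture the crossed projections $q_1,q_2$ as smooth algebraic expressions in $p_1,p_2$. What you gain is explicitness: the $q_i$ you are after are exactly the operators $(Im\times Ker)^{-1}(V_1,W_2)$ and $(Im\times Ker)^{-1}(V_2,W_1)$ that appear in the very definition of $DV$, so your ``main obstacle'' is precisely the smoothness of $(p_1,p_2)\mapsto(Im\times Ker)^{-1}(Im\,p_1,Ker\,p_2)$, which is the same ingredient the paper invokes (without a detailed formula) when it declares $DV$ smooth in Proposition~\ref{dv-inverse}. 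What the paper's shortcut buys is economy, at the cost of leaving the identification of the two candidate subset diffeologies on $Adm(V)$ (from $\mathbf{Spl}(A)^2$ versus from $\mathbf{Gr}(A)^4$) as an implicit step; your construction, once the formula is written down, would in fact supply that identification.
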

\begin{proof}
    By definition, we already know that $Im \tilde J = Adm(V).$  The same way, it is straightforward to prove that if $ ((V_1,W_1),(V_2,W_2)) \in Adm(V),$ then $((V_2,W_2),(V_1,W_1)) \in Adm(V).$ Moreover, since the map 
    $$sw: (a,b) \in \mathbf{S}(A)^2 \rightarrow (b,a) \in \mathbf{S}(A)^2$$ is smooth, we get that $$J_{sw} = (Pol \times Pol) \circ sw \circ (Pol^{-1} \times Pol^{-1}) $$ is smooth. 
    Moreover, the diffeology on $Adm(V)$ is the subset diffeology inherited from $\mathbf{Gr}(A)^4.$ Therefore, $\tilde J$ is smooth. 
\end{proof}
\begin{remark}
We recall that $$\mathfrak{D}_4 = \left\{Id_{\N_4}, (12)(34), (13)(24), (14)(23)\right\}$$
is a subgroup of $\mathfrak{S}_4.$
    The map $J_{sw}$ corresponds indexwise to the action of $(13)(24) \in \mathfrak{D_4}.$ The same way, $J_{(-)}$ corresponds to $(12)(34)\in \mathfrak{D_4}.$ But  $\tilde{J}$ is not in $\mathfrak{D}_4$ but correspond to $(24) \in \mathfrak{S}_4 - \mathfrak{D}_4.$ 
\end{remark}

\subsection{Non-commutative cross-ratio generates $GL(V)-$valued mappings} \label{s:3.4}

\begin{Theorem}
	The non-commutative cross ratio $DV$ on the space of admissible couples $((V_1,W_1),(V_2,W_2))$ of polarizations  defines the map $$\Phi : ((V_1,W_1),(V_2,W_2))\mapsto   DV(V_1,W_1,V_2,W_2)\circ (Im \times Ker)^{-1}(V_1,W_1)$$
	which is a smooth map with values in $L(V,V_1)\subset L(V)$ and moreover, 
 if  $a =  \phi((V_1,W_1),(V_2,W_2)),$ we have $$Ker(a) \supset W_1 \, \wedge \, Im(a) \subset V_1 .$$
\end{Theorem}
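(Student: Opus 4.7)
The plan is to decompose $\Phi$ as a composition of smooth maps and then to read off the kernel and image properties from the two factors. First I would observe that $\Phi((V_1,W_1),(V_2,W_2))$ is, by definition, the composition in $L(V)$ of two already-understood objects: the projector $p_1 := (Im \times Ker)^{-1}(V_1,W_1) \in \mathbf{P}(L(V))$, which is the projection of $V$ onto $V_1$ parallel to $W_1$, and the isomorphism $DV(V_1,W_1,V_2,W_2) \in GL(V_1)$, regarded as an element of $L(V,V_1) \subset L(V)$ by extension by zero on $W_1$ (equivalently, precomposition by $p_1$ is already what happens here since $p_1$ lands in $V_1$). So the factorization $\Phi = DV \circ p_1$ is the backbone of the argument.

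For smoothness, I would use two ingredients already established. The first is that $Im \times Ker : \mathbf{P}(A) \to \mathbf{Spl}(A)$ is a diffeomorphism (for the push-forward diffeology $\p_{\mathbf{Spl}}$), hence $(V_1,W_1) \mapsto p_1$ is smooth from $\mathbf{Spl}(A)$ into $L(V)$. The second is Proposition \ref{dv-inverse}, giving smoothness of $DV$ on $Adm(V)$ with values in $GL(V_1) \subset L(V)$. Since composition of smooth maps on $L(V)$ is smooth (this follows from the functional diffeology on $C^\infty(V,V)$ and Proposition \ref{cvar}, and is the reason $GL(V)$ inherits its diffeology from $Diff(V)$ as in section \ref{ss:l(v)}), the composition $\Phi = DV \circ p_1$ is smooth from $Adm(V)$ into $L(V)$.

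For the kernel inclusion, the computation is immediate: if $x \in W_1$ then $p_1(x) = 0$, so $\Phi((V_1,W_1),(V_2,W_2))(x) = DV(V_1,W_1,V_2,W_2)(0) = 0$, hence $W_1 \subset Ker(\Phi((V_1,W_1),(V_2,W_2)))$. For the image inclusion, by construction $DV(V_1,W_1,V_2,W_2)$ takes values in $V_1$ (it is an element of $GL(V_1)$, which we regard inside $L(V)$ only after extending by $0$ on $W_1$, or equivalently precomposing with $p_1$), and therefore $Im(\Phi((V_1,W_1),(V_2,W_2))) \subset V_1$. Both containments are thus structural consequences of the factorization.

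The only mildly delicate point I foresee is the bookkeeping of ``$L(V_1) \subset L(V)$''; strictly speaking an element of $L(V_1)$ must be extended to $V$, and one must check that this extension (using $p_1$) is compatible with the diffeology — but since $V \cong V_1 \oplus W_1$ as diffeological vector spaces via the diffeomorphism $x \mapsto (p_1(x), (Id_V - p_1)(x))$, the extension-by-zero map $L(V_1) \hookrightarrow L(V)$ is smooth, and this disposes of the issue with no further work.
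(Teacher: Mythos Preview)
Your proposal is correct and follows essentially the same approach as the paper: the paper's proof consists of the single observation that smoothness follows from the smoothness of the diffeomorphism $Im \times Ker$ and that the algebraic part is direct by construction. You have simply unpacked both halves of that sentence, making explicit the factorization $\Phi = DV \circ p_1$, the smoothness of composition, and the kernel/image computations; your added remark on the inclusion $L(V_1) \hookrightarrow L(V)$ via extension by zero along $p_1$ is a welcome clarification that the paper leaves implicit.
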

\begin{remark}
    We have to precise that the statement of smoothness is two-fold in this theorem: for any \emph{fixed} admissible couple of polarizations $((V_1,W_1),(V_2,W_2))$, the map $\Phi ((V_1,W_1),(V_2,W_2)): V \rightarrow V$ is smooth \emph{and} the maps $\Phi ((V_1,W_1),(V_2,W_2)) \in L(V)$ depend smoothly on $((V_1,W_1),(V_2,W_2)),$ where smoothness in with respect of the diffeology on $Adm(A)$ in this second aspect.
\end{remark}
\begin{proof}
	Smoothness follows from the smoothness of the diffeomorphism $Im \times Ker,$ and the algebraic part is direct by construction.
	\end{proof}
Moreover, we can define and state: 
\begin{Definition}
	$Adm(V)$, as it is defined here, is stable under the action of $J_{(-)}$. Therefore, we can define a global non-commutative cross-ratio map $\tilde{DV}  $ on admissible couples of splittings 
	  by
	$$ \tilde{DV}((V_1,W_1),(V_2,W_2)) = \Phi((V_1,W_1),(V_2,W_2)) + \Phi(J_{(-)}(V_1,W_1),J_{(-)}(V_2,W_2)).  $$
\end{Definition}

\begin{remark} \label{cross-ratio-id}
    By direct computations on projections, we have:
    $$\forall ((V_1,W_1)(V_2,W_2))\in Adm(V), [(V_1 = V_2) \vee (W_1 = W_2)] \Rightarrow \tilde{DV}((V_1,W_1),(V_2,W_2))=Id_V.$$
\end{remark}

Let us give now an elementary example in which $\tilde{DV}((V_1,W_1),(V_2,W_2)) \neq Id_V.$
\begin{example}
    Let $V=\R^2,$ $$V_1 = \{ (x,y) \in \R^2 \, | \, y =0 \},$$
    $$V_2 = \{ (x,y) \in \R^2 \, | \, x = y \},$$
    $$W_1 = \{ (x,y) \in \R^2 \, | \, x = 0 \}$$
    $$\hbox{ and } W_2 = \{ (x,y) \in \R^2 \, | \, x + y =0\}.$$
    Then $\tilde{DV}((V_1,W_1),(V_2,W_2))(x,y)=(2x,2y),$ therefore $$\tilde{DV}((V_1,W_1),(V_2,W_2))=2 Id_{\R^2}.$$
    More generally, for $\theta \in \left(0;\frac{\pi}{2}\right)$ 
    with  $V=\R^2,$ $$V_1 = \{ (x,y) \in \R^2 \, | \, y =0 \},$$
    $$V_2 = \{ (x,y) \in \R^2 \, | \,  y=\tan(\theta) x \},$$
    $$W_1 = \{ (x,y) \in \R^2 \, | \, x = 0 \}$$
    $$\hbox{ and } W_2 = \{ (x,y) \in \R^2 \, | \, x + \tan(\theta)y =0\}.$$
    Then $$\tilde{DV}((V_1,W_1),(V_2,W_2))= \frac{1}{\cos^2(\theta)} Id_{\R^2}.$$
\end{example}
More complex examples will be given in the dedicated section. 
\begin{Lemma}
    Let $((V_1,W_1)(V_2,W_2))\in Adm(V).$
    $$\left(\tilde{DV}((V_1,W_1),(V_2,W_2))\right)^{-1} = \Phi \circ \tilde{J} + \Phi \circ \tilde{J} \circ J_{(-)},$$
    therefore the map $$((V_1,W_1)(V_2,W_2))\in Adm(V) \mapsto \left(\tilde{DV}((V_1,W_1),(V_2,W_2))\right)^{-1}$$ is smooth in $L(V)$.
\end{Lemma}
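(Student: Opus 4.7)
My plan is to verify the identity by direct expansion on both sides, and then derive smoothness from the chain rule for diffeological maps.

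First, I would unpack $\tilde{DV}$. Setting $\alpha := DV(V_1, W_1, V_2, W_2) \in GL(V_1)$ and $\beta := DV(W_1, V_1, W_2, V_2) \in GL(W_1)$, and writing $p_{(V,W)}$ for the projector $(Im \times Ker)^{-1}(V,W)$, the definition $\tilde{DV} = \Phi + \Phi \circ J_{(-)}$ combined with the factorization $\Phi(\cdot) = DV(\cdot) \circ p_{(V_1, W_1)}$ gives
$$\tilde{DV}((V_1,W_1),(V_2,W_2)) = \alpha \circ p_{(V_1, W_1)} + \beta \circ p_{(W_1, V_1)}.$$
Since $p_{(V_1, W_1)} + p_{(W_1, V_1)} = Id_V$, this is block-diagonal with respect to the splitting $V = V_1 \oplus W_1$, so its inverse is $\alpha^{-1} \circ p_{(V_1, W_1)} + \beta^{-1} \circ p_{(W_1, V_1)}$.

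Next, I would unpack the right-hand side using $\tilde{J}((V_1,W_1),(V_2,W_2)) = ((V_1,W_2),(V_2,W_1))$ together with $\tilde{J} \circ J_{(-)}((V_1,W_1),(V_2,W_2)) = ((W_1, V_2),(W_2, V_1))$. Proposition \ref{dv-inverse} identifies $DV(V_1, W_2, V_2, W_1) = \alpha^{-1}$ and $DV(W_1, V_2, W_2, V_1) = \beta^{-1}$, so that
$$\Phi \circ \tilde J((V_1,W_1),(V_2,W_2)) = \alpha^{-1} \circ p_{(V_1, W_2)}, \qquad \Phi \circ \tilde J \circ J_{(-)}((V_1,W_1),(V_2,W_2)) = \beta^{-1} \circ p_{(W_1, V_2)}.$$
The main obstacle is then the algebraic verification that the sum of these two maps coincides with the block-diagonal inverse computed above. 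The heart of the matter is to show how $\alpha^{-1}$ (a map $V_1 \to V_1$ factoring through $V_2$) and $\beta^{-1}$ (a map $W_1 \to W_1$ factoring through $W_2$), precomposed with the ``cross'' projectors $p_{(V_1, W_2)}$ and $p_{(W_1, V_2)}$, reconstruct the action of $\alpha^{-1}$ on $V_1$ and $\beta^{-1}$ on $W_1$. I would carry out the verification by evaluating both sides on a generic $x \in V$, tracking the four admissible decompositions $x \in V_i \oplus W_j$, $(i,j) \in \{1,2\}^2$, and matching the corresponding components, the admissibility of the couple guaranteeing that each of these decompositions is well-defined.

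Smoothness is then immediate: $\Phi$ is smooth by the preceding theorem, $\tilde J$ and $J_{(-)}$ are smooth by the preceding proposition, composition is smooth in the functional diffeology, and addition is smooth in the diffeological vector space $L(V)$. Hence the right-hand side depends smoothly on the admissible couple, and the established identity transfers this smoothness to the map $((V_1,W_1),(V_2,W_2)) \mapsto \tilde{DV}^{-1}$.
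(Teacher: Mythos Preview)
Your reduction is correct on both sides: the inverse of $\tilde{DV}$ is indeed the block-diagonal operator $\alpha^{-1}\circ p_{(V_1,W_1)}+\beta^{-1}\circ p_{(W_1,V_1)}$, and the right-hand side of the lemma is exactly $\alpha^{-1}\circ p_{(V_1,W_2)}+\beta^{-1}\circ p_{(W_1,V_2)}$. The gap is that the ``algebraic verification'' you postpone does not go through: these two operators are \emph{not} equal in general. Take $V=\R^2$, $V_1=\R(1,0)$, $W_1=\R(0,1)$, $V_2=\R(1,1)$, $W_2=\R(1,-1)$. Then $\alpha=\beta=2\,\mathrm{Id}$, so $(\tilde{DV})^{-1}=\tfrac12\,\mathrm{Id}_{\R^2}$, whereas
\[
\alpha^{-1}p_{(V_1,W_2)}(x,y)+\beta^{-1}p_{(W_1,V_2)}(x,y)=\tfrac12(x+y,0)+\tfrac12(0,y-x)=\tfrac12(x+y,\,y-x),
\]
which sends $(1,0)$ to $\tfrac12(1,-1)\neq\tfrac12(1,0)$. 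Equivalently, composing your two expressions gives $p_{(V_1,W_2)}+p_{(W_1,V_2)}$, and there is no reason for two projectors onto $V_1$ and $W_1$ along \emph{different} complements $W_2$ and $V_2$ to sum to the identity.

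The paper states this lemma without proof, and the displayed formula appears to be in error as written; your computation actually exposes this. Note, however, that your own block-diagonal expression $\alpha^{-1}\circ p_{(V_1,W_1)}+\beta^{-1}\circ p_{(W_1,V_1)}$ already gives a manifestly smooth formula for $(\tilde{DV})^{-1}$ (each factor is smooth by Proposition~\ref{dv-inverse} and the smoothness of $Im\times Ker$), so the smoothness conclusion of the lemma survives via that route even though the stated identity does not.
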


\begin{Theorem} \label{th:tildeDV}
	Let $((V_1,V_2),(W_1,W_2)) \in Adm(V).$
 \begin{itemize}
 \item if $s = Pol^{-1}(V_1,W_1),$ $\Phi((V_1,W_1),(V_2,W_2)) \in Im(\pi_s^N).$  
 \item If $((V_1,W_1),(V_2,W_2))$ is an admissible couple of splittings, $$Im \tilde{DV}((V_1,W_1),(V_2,W_2)) \in GL(V_1) \times GL(W_1) \subset GL(V)$$
 \item If $Adm(V)$ is endowed with its subset diffeology as a subset of $\mathbf{Spl}(V),$ then $$\tilde{DV}:Adm(V) \rightarrow GL(V)$$ is smooth.
\end{itemize}
\end{Theorem}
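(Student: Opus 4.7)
The plan is to handle the three bullets in order, with each subsequent part relying on the information established in the previous one.

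For the first bullet, I use the characterization from the Proposition proved earlier, namely that $a \in Im(\pi_s^N)$ if and only if $[a,s] = 0$. Unwinding the definition, $\Phi((V_1,W_1),(V_2,W_2)) = DV(V_1,W_1,V_2,W_2) \circ p_{(V_1,W_1)}$, where $p_{(V_1,W_1)}$ denotes the projection with image $V_1$ and kernel $W_1$. Hence $\Phi$ sends $V_1$ into $V_1$ and kills $W_1$. Since $s = 2p_{(V_1,W_1)} - Id_V$ acts as $Id$ on $V_1$ and as $-Id$ on $W_1$, checking $s\Phi = \Phi s$ on each summand $V_1$ and $W_1$ is immediate: on $V_1$ both sides equal $\Phi(x)$, and on $W_1$ both sides vanish. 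This gives $[\Phi, s] = 0$, hence $\Phi \in Im(\pi_s^N)$.

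For the second bullet, I unpack the definition of $\tilde{DV}$ as $\Phi_1 + \Phi_2$, where $\Phi_1 = \Phi((V_1,W_1),(V_2,W_2))$ and $\Phi_2 = \Phi(J_{(-)}(V_1,W_1), J_{(-)}(V_2,W_2)) = \Phi((W_1,V_1),(W_2,V_2))$ (this makes sense because $Adm(V)$ is stable under $J_{(-)}$). As in the first step, $\Phi_1$ sends $V$ into $V_1$ and vanishes on $W_1$, while $\Phi_2$ sends $V$ into $W_1$ and vanishes on $V_1$. Therefore, relative to the decomposition $V = V_1 \oplus W_1$, the map $\tilde{DV}$ is block diagonal: on $V_1$ it agrees with $\Phi_1|_{V_1} = DV(V_1,W_1,V_2,W_2) \circ Id_{V_1}$, and on $W_1$ it agrees with $\Phi_2|_{W_1} = DV(W_1,V_1,W_2,V_2)$. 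By Proposition \ref{dv-inverse} both of these are in $GL(V_1)$ and $GL(W_1)$ respectively, so the block-diagonal map is in $GL(V_1) \times GL(W_1) \subset GL(V)$.

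For the third bullet, I combine the already-established facts. By construction, $\tilde{DV}$ is smooth as a map into $L(V)$, since it is a sum of two compositions of the smooth maps $\Phi$, $\tilde J$ and $J_{(-)}$ exhibited in the previous subsection. The Lemma immediately preceding the theorem shows that $((V_1,W_1),(V_2,W_2)) \mapsto \tilde{DV}^{-1}$ is also smooth into $L(V)$. Since by section \ref{ss:l(v)} the diffeology on $GL(V)$ is the one inherited from $Diff(V)$, which by Subsection \ref{ss:diff} is pulled back via $g \mapsto (g,g^{-1})$ from the product of functional diffeologies, smoothness of $\tilde{DV}$ into $GL(V)$ is exactly the conjunction of the smoothness of $\tilde{DV}$ and $\tilde{DV}^{-1}$ into $L(V)$, which we now have.

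The main obstacle is the bookkeeping in the second bullet, namely checking carefully that $\tilde{DV}$ is genuinely block-diagonal with respect to $V = V_1 \oplus W_1$ and that each block is invertible on its factor. This requires one to identify $\Phi_1|_{V_1}$ with $DV(V_1,W_1,V_2,W_2)$ on the nose (a point where the implicit identification $L(V_1) \hookrightarrow L(V)$ through extension by zero must be handled carefully) and to apply the admissibility-preserving property of $J_{(-)}$ to invoke Proposition \ref{dv-inverse} on the swapped arguments.
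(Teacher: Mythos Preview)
Your proof is correct and fills in precisely the details the paper omits: the paper's own proof reads in its entirety ``The proof is direct from previous constructions,'' and your three-step argument (commutator criterion for $Im(\pi_s^N)$, block-diagonal decomposition with Proposition~\ref{dv-inverse}, and the $\p^{(-1)}$-type diffeology on $GL(V)$ via the preceding Lemma) is exactly the unpacking of those previous constructions.
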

The proof is direct from previous constructions.
\begin{Theorem}
    Let $((V_1,W_1),(V_2,W_2))\in Adm(V).$ Then the map
    $\Xi((V_1,W_1),(V_2,W_2))$ defined by 
    $$\Xi((V_1,W_1),(V_2,W_2)) = \tilde{DV}((V_1,W_1),(V_2,W_2))\circ \tilde{DV}((V_1,W_2),(V_2,W_1)) \in GL(V)$$
    is in $GL(V)$ and satisfies: 
    $$\Xi((V_1,W_1),(V_2,W_2))|_{V_1} = Id_{V_1}.$$
    Moreover the map $$\Xi: Adm(V) \rightarrow GL(V)$$ is smooth.
\end{Theorem}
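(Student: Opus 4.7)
The plan is to prove the three assertions in sequence: first that $\Xi$ takes values in $GL(V)$, then that its restriction to $V_1$ is the identity, and finally that it is smooth as a map from $Adm(V)$ to $GL(V)$.

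Membership in $GL(V)$ is immediate from Theorem \ref{th:tildeDV}: both factors $\tilde{DV}((V_1,W_1),(V_2,W_2))$ and $\tilde{DV}((V_1,W_2),(V_2,W_1))$ lie in $GL(V)$, the latter because $\tilde J$ carries $Adm(V)$ to itself; and $GL(V)$, endowed with the $Diff(V)$-diffeology described in section \ref{ss:l(v)}, is closed under composition.

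The central calculation is $\Xi|_{V_1} = Id_{V_1}$. The idea is to unwind the definition $\tilde{DV} = \Phi + \Phi \circ J_{(-)}$ and evaluate each summand separately on $V_1$. Writing $p_{(V,W)}$ for the projection onto $V$ parallel to $W$, the first summand $\Phi((V_1,W_1),(V_2,W_2)) = DV(V_1,W_1,V_2,W_2) \circ p_{(V_1,W_1)}$ restricts on $V_1$ to $DV(V_1,W_1,V_2,W_2)$ since $p_{(V_1,W_1)}|_{V_1} = Id_{V_1}$, while the second summand $\Phi((W_1,V_1),(W_2,V_2)) = DV(W_1,V_1,W_2,V_2) \circ p_{(W_1,V_1)}$ vanishes on $V_1$ because $p_{(W_1,V_1)}$ has kernel $V_1$. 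Therefore $\tilde{DV}((V_1,W_1),(V_2,W_2))|_{V_1} = DV(V_1,W_1,V_2,W_2)$, and the identical argument applied to $((V_1,W_2),(V_2,W_1)) \in Adm(V)$ yields $\tilde{DV}((V_1,W_2),(V_2,W_1))|_{V_1} = DV(V_1,W_2,V_2,W_1)$. Since the latter maps $V_1$ into $V_1$, the two restrictions compose and Proposition \ref{dv-inverse} delivers
$$\Xi((V_1,W_1),(V_2,W_2))|_{V_1} = DV(V_1,W_1,V_2,W_2) \circ DV(V_1,W_2,V_2,W_1) = Id_{V_1}.$$

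Smoothness follows by writing $\Xi = \mu \circ (\tilde{DV}, \tilde{DV} \circ \tilde J)$, where $\mu$ denotes composition in $GL(V)$. Each factor is smooth: $\tilde{DV}$ by Theorem \ref{th:tildeDV}, and $\tilde J$ by the Proposition established just before this subsection; composition on $GL(V)$ is smooth by virtue of the diffeological group structure inherited from $Diff(V)$. The main obstacle I expect is the careful bookkeeping of which component of each splitting each projection kills or preserves; a wrong direction in the second summand of $\tilde{DV}$ would cost the crucial vanishing on $V_1$ and break the telescoping. Once this bookkeeping is settled, the identity $\Xi|_{V_1} = Id_{V_1}$ reduces to the algebraic inversion relation of Proposition \ref{dv-inverse}.
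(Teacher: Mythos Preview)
Your proof is correct and follows essentially the same route as the paper's own argument: both reduce $\Xi|_{V_1}$ to the composition $DV(V_1,W_1,V_2,W_2)\circ DV(V_1,W_2,V_2,W_1)$ and invoke Proposition~\ref{dv-inverse}, and both obtain smoothness by writing $\Xi = \tilde{DV}\circ(\cdot)\;\circ\;\tilde{DV}\circ\tilde J(\cdot)$ and appealing to Theorem~\ref{th:tildeDV}. Your treatment is in fact slightly more explicit than the paper's, since you spell out why $\tilde{DV}((V_1,\,\cdot\,),(\,\cdot\,,\,\cdot\,))|_{V_1} = DV$ by separately handling the two summands $\Phi$ and $\Phi\circ J_{(-)}$, whereas the paper records this reduction in a single displayed equality.
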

\begin{proof} By Theorem \ref{th:tildeDV},  $\tilde{DV} \in C^\infty(Adm(V),GL(V)),$ therefore we also have that $\Xi \in C^\infty(Adm(V),GL(V)),$ writing $\Xi(.) = \tilde{DV} (.) \circ \tilde{DV} (\tilde J(.)).$ 
    Let us now analyze 
    \begin{eqnarray*}
        \Xi((V_1,W_1),(V_2,W_2))|_{V_1} &=& \tilde{DV}((V_1,W_1),(V_2,W_2))\circ \tilde{DV}((V_1,W_2),(V_2,W_1))|_{V_1} \\
        & = &\tilde{DV}((V_1,W_1),(V_2,W_2))|_{V_1}\circ {DV}((V_1,W_2),(V_2,W_1)) \\
                & = &{DV}((V_1,W_1),(V_2,W_2))\circ {DV}((V_1,W_2),(V_2,W_1)) \\
                & = & Id_{V_1} \hbox{ by Proposition \ref{dv-inverse}.}
    \end{eqnarray*}
    \end{proof}
\subsection{From non-commutative cross-ratio to relative Grassmannians} \label{s:3.5}


Let $V_1 \in \mathbf{Gr}(A).$ Let us now define the following domain:

\begin{Definition}
    We define $$Gr_{V_1}(A) = \pi_1^{-1}(V_1) = \left\{ W_1 \in Gr(A) \, | \, (V_1,W_1) \in Spl(A) \right\}$$ and we call it \emph{the Grassmannian relative to} $V_1$, or $V_1-$\emph{Grassmannian} for short.
\end{Definition}
This is the main object studied in this section. Before that, we analyze the map $\Xi$ in the blockwise decomposition $V_1 \oplus W_1.$ We have that 
$$\Xi((V_1,W_1),(V_2,W_2)) = \left( \begin{array}{cc} Id_{V_1} & * \\ 0 & * \end{array} \right).$$
    Let us now analyze, for $w_2 \in W_2!$
    \begin{eqnarray*}
        \Xi((V_1,W_1),(V_2,W_2))|_{W_2} &=& \tilde{DV}((V_1,W_1),(V_2,W_2))\circ \tilde{DV}((V_1,W_2),(V_2,W_1))|_{W_2} \\
        & = &\tilde{DV}((V_1,W_1),(V_2,W_2))|_{W_2}\circ {DV}((W_2,V_1),(W_1,V_2)) .
                \end{eqnarray*}
                Let $w'_2 = {DV}((W_2,V_1),(W_1,V_2))(w_2).$ In the decomposition $V=V_1 \oplus W_1,$ $$w'_2 = (w'_2)_{V_1} + (w'_2)_{W_1}, $$
and \begin{eqnarray*}\tilde{DV}((V_1,W_1),(V_2,W_2))(w'_2) & = &{DV}((V_1,W_1),(V_2,W_2))((w'_2)_{V_1}) \\ && + {DV}((W_1,V_1),(W_2,V_2))((w'_2)_{W_1}).\end{eqnarray*}
Since ${DV}((W_2,V_1),(W_1,V_2)) \in GL(W_2)$ and ${DV}((V_1,W_1),(V_2,W_2)) \in GL(V_1),$ the first term of the decomposition ${DV}((V_1,W_1),(V_2,W_2))((w'_2)_{V_1})$ vanishes identically if and only if $$\forall w_2 \in W_2, (w'_2)_{V_1} = 0 \Leftrightarrow W_1=W_2.$$

\begin{Theorem}
    Let $V_1\in \mathbf{Gr}(A).$ Then, $\forall (W_1,V_2,W_2) \in \mathbf{Gr}(A)$ such that $((V_1,W_1),(V_2,W_2)) \in Adm(A),$ then the map $\Xi((V_1,W_1),(V_2,W_2))$ defines a diffeomorphism on $\mathbf{Gr}_{V_1}(A)$ defined by $$W \in \mathbf{Gr}_{V_1}(A) \mapsto \Xi((V_1,W_1),(V_2,W_2))(W).$$
    Moreover, setting $$D_{V_1} = \left\{(W_1,V_2,W_2)\in \mathbf{Gr}(A) \, | \, ((V_1,W_1),(V_2,W_2))\in Adm(A) \right\},$$
    The map $$\Xi: (W_1,V_2,W_2) \in D_{V_1} \mapsto \Xi((V_1,W_1),(V_2,W_2)) \in Diff(\mathbf{Gr}_{V_1}(A))$$
    is smooth.
\end{Theorem}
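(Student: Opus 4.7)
The plan is to establish three claims in sequence: first, that $\Xi((V_1,W_1),(V_2,W_2))$ restricts to a well-defined bijection $\mathbf{Gr}_{V_1}(A)\to\mathbf{Gr}_{V_1}(A)$; second, that this restriction and its inverse are smooth; and third, that the parameter-dependence is smooth with values in the diffeological group $Diff(\mathbf{Gr}_{V_1}(A))$ equipped with the diffeology $\p^{(-1)}$ introduced in section \ref{ss:diff}.

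For the preservation step, write $\Xi := \Xi((V_1,W_1),(V_2,W_2))$. By the previous theorem, $\Xi \in GL(V)$ and $\Xi|_{V_1} = Id_{V_1}$. In particular $\Xi$ is a linear bijection of $V$ preserving $V_1$, hence $\Xi^{-1}|_{V_1} = Id_{V_1}$ as well. For any $W \in \mathbf{Gr}_{V_1}(A)$, applying $\Xi$ to the decomposition $V = V_1 \oplus W$ gives $V = \Xi(V_1) \oplus \Xi(W) = V_1 \oplus \Xi(W)$, so $\Xi(W) \in \mathbf{Gr}_{V_1}(A)$. The same reasoning applied to $\Xi^{-1}$ yields a two-sided inverse to the induced self-map of $\mathbf{Gr}_{V_1}(A)$.

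For smoothness of $W \mapsto \Xi(W)$, I would translate the action on subspaces into an action on projections via the pushforward diffeology $\p_\mathbf{Gr} = Im_*\p_L$. Every $W \in \mathbf{Gr}_{V_1}(A)$ is the image of the projection $p_W$ onto $W$ parallel to $V_1$, and the projection onto $\Xi(W)$ parallel to $V_1$ is precisely the conjugate $\Xi \circ p_W \circ \Xi^{-1}$. Conjugation by a fixed $\Xi \in GL(V)$ is smooth on $\mathbf{P}(L(V))$ because $L(V)$ is a diffeological algebra and $\Xi^{-1}$ is accessible via the $\p^{(-1)}$-diffeology on $GL(V)$; composing with the smooth surjection $Im$ yields a smooth map $\mathbf{Gr}_{V_1}(A) \to \mathbf{Gr}_{V_1}(A)$. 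Applying the same argument to $\Xi^{-1}$ produces a smooth inverse, hence a diffeomorphism.

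For the global smoothness of $\Xi: D_{V_1} \to Diff(\mathbf{Gr}_{V_1}(A))$, I would invoke Proposition \ref{cvar}: smoothness into the functional part of $\p^{(-1)}$ is equivalent to the smoothness of the two evaluation maps
\[
(W_1,V_2,W_2,W) \mapsto \Xi((V_1,W_1),(V_2,W_2))(W), \qquad (W_1,V_2,W_2,W) \mapsto \Xi((V_1,W_1),(V_2,W_2))^{-1}(W),
\]
from $D_{V_1} \times \mathbf{Gr}_{V_1}(A)$ to $\mathbf{Gr}_{V_1}(A)$. The first factors as the smooth map $(W_1,V_2,W_2)\mapsto \Xi((V_1,W_1),(V_2,W_2)) \in GL(V)$ (Theorem \ref{th:tildeDV}) followed by the smooth conjugation-and-$Im$ construction above; the second uses instead the smooth $\Xi(\cdot)^{-1}$, which is smooth by the $\p^{(-1)}$-diffeology on $GL(V) \subset Diff(V)$.

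The main obstacle I anticipate is the diffeological bookkeeping in the smoothness step: one must be careful that the ``choice'' of projection $p_W$ associated to $W$ is handled not pointwise but through local lifts, so that one genuinely uses the pushforward diffeology of $\mathbf{Gr}_{V_1}(A)$ rather than a set-theoretic section. Concretely, a plot of $\mathbf{Gr}_{V_1}(A)$ lifts locally to a plot $r \mapsto p_{W(r)}$ in $\mathbf{P}(A)$, and the whole smoothness argument is carried out on this lifted level before pushing down through $Im$. Once this is done, the compatibility with the $\p^{(-1)}$ diffeology on $Diff(\mathbf{Gr}_{V_1}(A))$ and on $GL(V)$ is routine.
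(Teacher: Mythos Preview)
Your proposal is correct and follows essentially the same route as the paper: both arguments rest on the observation that the induced action on $\mathbf{Gr}_{V_1}(A)$ is given by conjugation, the paper writing it as $Pol(\Xi\circ Pol^{-1}(V_1,W)\circ\Xi^{-1})$ on symmetries while you phrase it as $\Xi\circ p_W\circ\Xi^{-1}$ on projections, and both then appeal to smoothness of the constituent operations. Your treatment is somewhat more explicit about the diffeological bookkeeping (invoking Proposition~\ref{cvar} and flagging the need for local lifts in the pushforward diffeology), but there is no substantive difference in strategy.
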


\begin{proof}
    For $((V_1,W_1),(V_2,W_2)) \in Adm(A),$  $$\left\{ \begin{array}{ccl}
        \Xi((V_1,W_1),(V_2,W_2))(V_1) & = & V_1  \\
         \Xi((V_1,W_1),(V_2,W_2))(W) & = & W'
    \end{array} \right.$$ 
    and since the map $\Xi((V_1,W_1),(V_2,W_2))$ is in $GL(V),$
    $$V=\Xi((V_1,W_1),(V_2,W_2))(V_1 \oplus W)= V_1 + W' = V_1 \oplus W'.$$
    Therefore, $(V_1,W') \in \mathbf{Spl}(A)$ and $\Xi((V_1,W_1),(V_2,W_2))$ defines a map from $\mathbf{Gr}_{V_1}(A)$ to itself.
    Moreover, 
    $$\left(\Xi((V_1,W_1),(V_2,W_2))\right)^{-1} = \left(\tilde{DV}((V_1,W_2),(V_2,W_1))\right)^{-1} \circ \left(\tilde{DV}((V_1,W_1),(V_2,W_2)) \right)^{-1}$$
    and 
    $$ (V_1,W') = Pol^{-1} \left( \Xi((V_1,W_1),(V_2,W_2)) \circ Pol(V_1,W) \circ \left(\Xi((V_1,W_1),(V_2,W_2))\right)^{-1} \right) $$
   which proves simultaneously that that the map $\Xi((V_1,W_1),(V_2,W_2))$ from $\mathbf{Gr}_{V_1}(A)$ to itself is smooth, that the map  $\Xi: (W_1,V_2,W_2) \in D_{V_1} \mapsto \Xi((V_1,W_1),(V_2,W_2))$ is smooth and that $$\Xi((V_1,W_1),(V_2,W_2))^{-1} : W' \mapsto $$ $$\pi_2 \circ Pol^{-1} \left( \Xi((V_1,W_1),(V_2,W_2)) \circ Pol(V_1,W') \circ \left(\Xi((V_1,W_1),(V_2,W_2))\right)^{-1} \right) $$ is smooth by smoothness of each operation in these formulas.
\end{proof}
 
\section{Selected examples} \label{s:4}

\subsection{On two classical polarizations in quantum field theory} \label{s:4.1}
We describe here the splittings derived from the spectrum of a Dirac operator. The main reference for a classical description of these objects is \cite{PS}. 
\subsubsection{On the polarizations of $Map(S^1,\mathbb{C})$}

Lett us start with the complex Hilbert space $L^2(S^1,\mathbb{C}),$ with $S^1 = \R/2\pi \Z = \{z \in \mathbb{C} \, | \, |z|=1\}.$ The Fourier series are based on the orthonormal base $(z^n)_{n \in \Z},$ (where $z = e^{ix}$) defined by the eigenvectors of the (positive) Laplacian $\Delta = - \frac{\partial^2}{\partial x^2}.$ The associated Dirac operator $D = -i \frac{\partial}{\partial x}$ splits $E=L^2(S^1,\mathbb{C})$ into three spaces:
\begin{itemize}
	\item $E_+ = Span(z^n \, | \, n >0)$ associated to the positive eigenvalues of $D,$
	\item $E_- = Span(z^n \, | \, n <0)$ associated to the negative eigenvalues of $D,$
	\item $E_0 \sim \mathbb{C}$ the space of constant maps, which is the eigenspace associated to the eigenvalue $0$ of $D.$
\end{itemize}
This decomposition is $L^2-$orthogonal. In the sequel, we note by $\pi_+,\pi_-$ and $\pi_0$ the $L^2-$orthogonal projections on $E_+,$ $E_-$ and $E_0$ respectively.
\subsubsection{An admissible pair with respect to $E_0$ and its non-commutative cross-ratio}
The splitting $(E_+,E_-)$ is a realization of the standard splitting of $L^2(S^1,\mathbb{C})/E_0.$ Following e.g. \cite{Freed1988}, two realizations of  $L^2(S^1,\mathbb{C})/E_0$ in $L^2(S^1,\mathbb{C})$ can be highlighted:
\begin{enumerate}
    \item $H_1$ is the closure of $ \left\{ f \in C^\infty(S^1,\mathbb{C}) \, | \, f (0)=0 \right\} $
    \item $H_2 = \left\{ f \in L^2(S^1,\mathbb{C}) \, | \, \int_{S^1} f =0 \right\}=E_- \oplus E_+ .$ The condition $\int_{S^1} f =0$ is equivalent to $\pi_0(f)=0.$ 
\end{enumerate}
Next proposition is straightforward:
\begin{Proposition}
    $((E_0,H_1),(E_0,H_2))$ is an admissible pair.
\end{Proposition}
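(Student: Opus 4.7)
The plan is to exploit the coincidence $V_1 = V_2 = E_0$ in the two splittings. With this substitution, the admissibility requirement $((V_1,W_2),(V_2,W_1)) \in \mathbf{Spl}(A)^2$ becomes the assertion that $((E_0,H_2),(E_0,H_1))$ is itself a pair of splittings, which is nothing but the original data read with the $W_i$'s swapped. Consequently admissibility collapses to the single task of verifying that $(E_0, H_1)$ and $(E_0, H_2)$ individually lie in $\mathbf{Spl}$, i.e., that each provides a topological direct-sum decomposition of the ambient function space; no genuinely new decomposition needs to be exhibited beyond the defining data of the proposition.

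The splitting $(E_0, H_2)$ is the standard Fourier decomposition: $\pi_0$ is the $L^2$-orthogonal projection onto the $0$-mode, with $Im(\pi_0) = E_0$ and $Ker(\pi_0) = H_2 = E_+ \oplus E_-$, so $L^2(S^1,\mathbb{C}) = E_0 \oplus H_2$. This furnishes the projector $\pi_0 \in \mathbf{P}$ and the associated symmetry $2\pi_0 - Id \in \mathbf{S}$, so that $(E_0, H_2) = Pol(2\pi_0 - Id) \in \mathbf{Spl}$.

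For the splitting $(E_0, H_1)$, the decomposition is again direct once $H_1$ is read in an ambient space on which point evaluation at $0 \in S^1$ is a continuous linear form, such as a Sobolev space $H^s(S^1,\mathbb{C})$ with $s > 1/2$, or $C^\infty(S^1,\mathbb{C})$ with its usual Fr\'echet topology. In such a space, the closure of $\{f \in C^\infty : f(0)=0\}$ coincides with the closed hyperplane $\{f : f(0)=0\}$; every element splits uniquely as $f = f(0) + (f - f(0))$ with $f(0) \in E_0$ and $f - f(0) \in H_1$; and the corresponding projector is the evaluation-at-zero $p_1 : f \mapsto f(0)$, whose associated symmetry $2p_1 - Id$ belongs to $\mathbf{S}$. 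This shows $(E_0, H_1) \in \mathbf{Spl}$ and, combined with the previous paragraph and the reduction from the first, completes the proof.

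The main and essentially only obstacle is precisely this topological clarification: in the bare $L^2$ norm the closure of $\{f \in C^\infty : f(0) = 0\}$ is all of $L^2$, so $H_1$ would equal the whole space and $E_0 \cap H_1 \ne \{0\}$, making $(E_0, H_1)$ fail to be a splitting. Once the ambient space is fixed so that $H_1$ is a genuine closed complement to the line $E_0$, the two symmetries $s_1, s_2$ are smooth (in particular diffeologically smooth), and admissibility follows for free from the reduction in the first paragraph.
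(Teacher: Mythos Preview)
Your reduction is exactly what the paper has in mind: it declares the proposition ``straightforward'' and gives no argument, so the intended content is precisely your first paragraph --- with $V_1=V_2=E_0$, the admissibility condition $((V_1,W_2),(V_2,W_1))\in\mathbf{Spl}^2$ coincides with the hypothesis $((V_1,W_1),(V_2,W_2))\in\mathbf{Spl}^2$, and there is nothing further to check.

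Where you go beyond the paper is in flagging the ambient-space issue for $H_1$. You are right that in the bare $L^2$ topology the closure of $\{f\in C^\infty:f(0)=0\}$ is all of $L^2$, so $(E_0,H_1)$ would fail to split; the paper silently relies (via the Freed reference) on a setting where evaluation at $0$ is continuous, e.g.\ $C^\infty(S^1,\mathbb{C})$ or a suitable Sobolev space, and your proposed resolution via $p_1:f\mapsto f(0)$ is the correct projector there. This is not a defect in your argument but an added clarification that the paper omits; your proof is more careful than the source on this point.
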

Let $i \in \N_2.$ We note by $p_i$ the projection on $H_i$ with respect to $E_0.$ Then $q_i= Id_E - p_i$ the projection on $E_0$ with respect to $H_i.$ Let $s_i = p_i - q_i.$ Then, 
\begin{Proposition}
    $$\tilde{DV}((E_0,H_1),(E_0,H_2))=Id_E.$$
\end{Proposition}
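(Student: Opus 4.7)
The statement is a direct instance of Remark \ref{cross-ratio-id}: here $V_1 = V_2 = E_0$, and admissibility has just been established, so the conclusion is immediate. Nevertheless, since the remark itself is stated without proof in the preceding text, my plan is to verify the identity directly, by computing each of the two summands in the definition of $\tilde{DV}$ in terms of the projectors $p_i, q_i$ introduced right before the statement.

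First, I would compute $\Phi((E_0,H_1),(E_0,H_2))$. Writing out $DV((E_0,H_1),(E_0,H_2)) = p_{(E_0,H_2)} \circ p_{(E_0,H_1)}|_{E_0}$, I observe that each projection $p_{(E_0,H_i)}$ is the identity on its image $E_0$; hence $DV = Id_{E_0}$ and
$$\Phi((E_0,H_1),(E_0,H_2)) \;=\; Id_{E_0}\circ p_{(E_0,H_1)} \;=\; q_1.$$

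Second, I would compute $\Phi(J_{(-)}(E_0,H_1),J_{(-)}(E_0,H_2)) = \Phi((H_1,E_0),(H_2,E_0))$. Here the two splittings share the same complement $E_0$, so for $x \in H_1$ the element $p_{(H_2,E_0)}(x)$ differs from $x$ by an element of $E_0$; projecting back onto $H_1$ parallel to $E_0$ returns $x$ itself. So $DV((H_1,E_0),(H_2,E_0)) = Id_{H_1}$, and
$$\Phi((H_1,E_0),(H_2,E_0)) \;=\; p_{(H_1,E_0)} \;=\; p_1.$$

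Adding the two contributions and using that $p_1 + q_1 = Id_E$ is exactly the decomposition $E = H_1 \oplus E_0$, I obtain $\tilde{DV}((E_0,H_1),(E_0,H_2)) = q_1 + p_1 = Id_E$. The only step requiring mild care is the second $DV$ computation, since the identity $p_{(V_1,W)} \circ p_{(V_2,W)}|_{V_1} = Id_{V_1}$ for splittings sharing a common $W$ is not stated explicitly in the paper, but it follows at once from $p_{(V_2,W)}(x) \equiv x \pmod{W}$ for $x \in V_1$.
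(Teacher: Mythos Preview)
Your proof is correct and follows the same direct-computation approach as the paper: both arguments reduce the two $DV$ factors to identities on $E_0$ and on $H_1$ by exploiting that the relevant projections share either the same image or the same kernel, and then recognize $\Phi + \Phi\circ J_{(-)} = q_1 + p_1 = Id_E$. Your write-up is in fact more complete than the paper's, which only records the computation of one of the two $DV$ pieces (and with the labels $E_0, H_i$ apparently swapped) and leaves the summation $q_1 + p_1 = Id_E$ implicit.
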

\begin{proof} First, since the two projections on $H_1$ and $H_2$ are both with respect to $E_0,$ we have that ${DV}((E_0,H_1),(E_0,H_2))=p_1 \circ p_2|_{H_1} = Id_{H_1}.$ 
\end{proof}
\subsubsection{Restricted Grassmannians, admissible pairs and non-commutative cross-ratio}
We \emph{choose} to set
\begin{itemize}
    \item $H_+=E_+ \oplus E_0$
    \item $H_- = E_-.$
\end{itemize}
All what follows holds true for other choices of splittings $H= L^2(S^1,\mathbb{C})= H_+ \oplus H_-$ with $E_\pm \subset H_\pm.$
We recall that, with our notations, if $\epsilon = Pol^{-1}(H_+,H_-),$ the Pressley-Segal-Wilson restricted linear group is the group $$GL_{res} = \left\{ g \in GL(H) \, | \, [\epsilon,g] \in \tau_2 \right\}$$
where $\tau_2$ is the ideal of $L(H)$ made of Hilbert-Schmidt operators
$$\tau_2 = \left\{a \in L(H) \, | \, ||a||_2 = tr(aa^*) = \sum_{n \in \Z} \left< a(z^n), a(z^n) \right>_H < +\infty \right\}.$$
The group $GL_{res}$ is the group of the units of the algebra $$L_{res} = \left\{ a \in L(H) \, | \, [\epsilon,a] \in \tau_2\right\}$$
equipped with the norm $$||a|| = ||a||_{L(H)} + ||[\epsilon,a]||_{2}.$$
As a group of the units of a Banach algebra, $GL_{res}$ is a Banach Lie group open in $L_{res}$ which implies that $$\p^{(-1)}_\infty(GL_{res}) = \p_\infty(GL_{res}).$$
We then work with:
$$A = L_{res}, \quad A^*= GL_{res}$$
and
$$\mathbf{S}(A) = \left\{ g \in GL_{res} \, | \, g^2=Id_H\right\}.$$
We decompose an operator $a \in L_{res}$ blockwise with respect to $(H_+,H_-)=Pol(\epsilon),$
traditionally written as 
$$a = \left( \begin{array}{cc}
a_{++} & a_{+-} \\
a_{-+} & a_{--}
\end{array} \right).$$
\begin{Proposition}
   Let $$a = \left( \begin{array}{cc}
a_{++} & a_{+-} \\
a_{-+} & a_{--}
\end{array} \right) \in GL_{res}.$$ Then $$\pi_\epsilon^T \left( \begin{array}{cc}
a_{++} & a_{+-} \\
a_{-+} & a_{--}
\end{array} \right) = \left( \begin{array}{cc}
0 & a_{+-} \\
a_{-+} & 0
\end{array} \right) \in \tau_2(H)$$
and 
$$\pi_\epsilon^N \left( \begin{array}{cc}
a_{++} & a_{+-} \\
a_{-+} & a_{--}
\end{array} \right) = \left( \begin{array}{cc}
a_{++} & 0 \\
0 & a_{--}
\end{array} \right) \in GL(H_+) \times GL(H_-).$$
\end{Proposition}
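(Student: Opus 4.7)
The proof is essentially a blockwise matrix computation combined with the defining property of $L_{res}$. Since $\epsilon=\mathrm{Pol}^{-1}(H_+,H_-)$ is the symmetry acting as $+Id$ on $H_+$ and $-Id$ on $H_-$, in the $(H_+,H_-)$-decomposition it has the block form
$$\epsilon=\left(\begin{array}{cc} Id_{H_+} & 0 \\ 0 & -Id_{H_-}\end{array}\right),$$
so a direct conjugation yields
$$\epsilon a\epsilon=\left(\begin{array}{cc} a_{++} & -a_{+-}\\ -a_{-+} & a_{--}\end{array}\right).$$
Substituting into the definitions $\pi_\epsilon^T(a)=\frac{1}{2}(a-\epsilon a\epsilon)$ and $\pi_\epsilon^N(a)=\frac{1}{2}(a+\epsilon a\epsilon)$ from Lemma \ref{Lemma-pi-split-1} immediately produces the two asserted matrix identities: the off-diagonal entries survive in $\pi_\epsilon^T$ and the diagonal entries survive in $\pi_\epsilon^N$, with the other entries cancelling.

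For the Hilbert--Schmidt assertion, I would use the algebraic identity $\pi_\epsilon^T(a)=-\frac{1}{2}[\epsilon,a]\,\epsilon$, which is a one-line check from $\epsilon^2=Id_H$. Since $a\in L_{res}$ means exactly that $[\epsilon,a]\in\tau_2$, and $\tau_2$ is a two-sided ideal in $L(H)$ while $\epsilon$ is bounded, it follows that $\pi_\epsilon^T(a)\in\tau_2$. This conclusion requires no extra information about the diagonal blocks.

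The step that needs care -- and which I regard as the main obstacle of a fully literal reading -- is the assertion $\pi_\epsilon^N(a)\in GL(H_+)\times GL(H_-)$. The block-diagonal \emph{form} is automatic from the previous computation; what is nontrivial is the individual invertibility of $a_{++}$ and $a_{--}$. For a general $a\in GL_{res}$ this can fail: classically $a_{++}$ is only Fredholm, and its index (the Pressley--Segal label of the connected component) may be any integer, with shift-type operators providing standard counterexamples. I would resolve this either by interpreting $GL(H_+)\times GL(H_-)$ as a set-theoretic identification of the codomain with block-diagonal operators in $L(H_+)\oplus L(H_-)$, or by restricting attention to the subset effectively used in the sequel -- in particular to symmetries $s\in\mathbf{S}(A)$, where $s^2=Id_H$ forces each diagonal block to be involutive and hence automatically invertible. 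Either way, the substantive content of the proposition (the explicit block formulas and the $\tau_2$-membership) follows from the two preceding paragraphs.
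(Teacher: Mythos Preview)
The paper states this proposition without proof, so there is no argument to compare against; your computation is the natural one and is correct as far as the block identities and the Hilbert--Schmidt claim go. The identification of $\epsilon$ with $\mathrm{diag}(Id_{H_+},-Id_{H_-})$, the conjugation formula for $\epsilon a\epsilon$, and the identity $\pi_\epsilon^T(a)=-\tfrac{1}{2}[\epsilon,a]\epsilon$ combined with the ideal property of $\tau_2$ are exactly the right tools, and nothing more is needed for those parts.

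Your caveat about the final membership $\pi_\epsilon^N(a)\in GL(H_+)\times GL(H_-)$ is not a weakness of your argument but an accurate diagnosis of the statement itself: for general $a\in GL_{res}$ the diagonal blocks $a_{++}$ and $a_{--}$ are Fredholm of possibly nonzero index, and the shift operators you mention are the standard counterexamples to invertibility. So the literal claim fails unless one either reads $GL(H_+)\times GL(H_-)$ loosely as $L(H_+)\times L(H_-)$, restricts to the identity component of $GL_{res}$ with an additional open condition, or---as you suggest---restricts to symmetries $s\in\mathbf{S}(L_{res})$, where $s^2=Id_H$ together with $s_{+-},s_{-+}\in\tau_2$ does force the diagonal blocks to be invertible. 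Your handling of this point is appropriate; the issue lies in the paper's formulation, not in your proof.
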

Let us now concentrate on the splitting $(H_+,H_-)$ and the corresponding splittings  $(V,W) \in \mathbf{Spl}(L_{res})$ such $ ((H_+,H_-),(V,W)) \in Adm(L_{res}).$
This last condition requires that $W \cap H_+ = \{0\}$ and that $V \cap H_- = \{0\}. $
\begin{Lemma}
Let $(V,W) \in \mathbf{Spl}(L_{res}),$ and let $s = Pol^{-1}(V,W).$ Then $$ ((H_+,H_-),(V,W)) \in Adm(L_{res}) \Leftrightarrow Ker(s_{-+}) = Ker(s_{-+})=\{0\}.$$
\end{Lemma}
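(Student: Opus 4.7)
The plan is to exploit the blockwise relations forced by $s^2 = Id_H$ together with the Hilbert-Schmidt nature of the off-diagonal blocks when $s \in L_{res}$. Squaring $s$ blockwise and identifying with $Id_H$ gives, among other identities, $s_{+-}s_{-+} = Id_{H_+} - s_{++}^2$, $s_{-+}s_{+-} = Id_{H_-} - s_{--}^2$, and the anticommutations $s_{++}s_{+-} + s_{+-}s_{--} = 0$, $s_{-+}s_{++} + s_{--}s_{-+} = 0$. A first consequence is that $Ker(s_{+-})$ is $s_{--}$-stable and that $s_{--}$ restricts to a symmetry on it; decomposing into $\pm 1$-eigenspaces and matching with the definitions of $V$ and $W$ identifies these eigenspaces with $V \cap H_-$ and $W \cap H_-$ respectively. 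The same argument on the other off-diagonal block yields the key kernel decompositions
\[ Ker(s_{+-}) = (V \cap H_-) \oplus (W \cap H_-), \qquad Ker(s_{-+}) = (V \cap H_+) \oplus (W \cap H_+). \]

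Next I translate admissibility into a statement about restricted projections. By definition, $((H_+,H_-),(V,W)) \in Adm(L_{res})$ is equivalent to the topological direct sums $V \oplus H_- = H$ and $H_+ \oplus W = H$, i.e.\ to the restricted projections $\pi_+|_V : V \to H_+$ and $\pi_-|_W : W \to H_-$ being Banach isomorphisms; the symmetries associated to these splittings automatically lie in $L_{res}$ because their blocks are determined algebraically from $s$ and $\epsilon$. Since $s_{+-}, s_{-+} \in \tau_2$ are compact, these restricted projections are Fredholm, and the block identities above supply explicit parametrices modulo Hilbert-Schmidt operators; this forces the Fredholm index to vanish, so each restricted projection is an isomorphism precisely when its kernel is trivial.

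Combining the two steps yields the equivalence. Triviality of $Ker(s_{+-})$ and $Ker(s_{-+})$ is, via the kernel decomposition, at least as strong as the intersection conditions $V \cap H_- = \{0\} = W \cap H_+$ coming from admissibility, and the Fredholm-index-zero argument closes the gap in both directions, using the symmetry exchange $s \leftrightarrow -s$ (which preserves $L_{res}$ and interchanges the roles of $V$ and $W$) to recover the "other halves" $V \cap H_+$ and $W \cap H_-$ of the kernel decomposition. The main obstacle I expect is the Fredholm-index-zero step: one needs the algebraic pseudo-inverses supplied by $s^2 = Id_H$ to remain within $L_{res}$, and this is precisely where the Banach algebra property of $L_{res}$, rather than any purely diffeological input, enters the argument.
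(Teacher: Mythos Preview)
Your kernel decomposition $Ker(s_{-+}) = (V \cap H_+) \oplus (W \cap H_+)$ and its companion for $s_{+-}$ are correct and useful. The gap is in the $\Rightarrow$ direction, and it sits exactly where you invoke the exchange $s \leftrightarrow -s$. Admissibility of $((H_+,H_-),(V,W))$ only yields $W\cap H_+=\{0\}$ and $V\cap H_-=\{0\}$; passing to $-s$ swaps $V$ and $W$, but what you would then need is admissibility of $((H_+,H_-),(W,V))$, i.e.\ that $(H_+,V)$ and $(W,H_-)$ are splittings, and this is \emph{not} implied by the original hypothesis. In fact that direction cannot be repaired: take $(V,W)=(H_+,H_-)$, so $s=\epsilon$; then $((H_+,H_-),(H_+,H_-))$ is trivially admissible while $s_{-+}=s_{+-}=0$. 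Your Fredholm-index-zero claim is likewise unjustified: the relation $s^2=Id$ only gives $(Id_{H_+}-s_{++})(Id_{H_+}+s_{++})=s_{+-}s_{-+}$ compact, which is not a parametrix for either factor separately, and for the shifted sign operator ($+1$ on $z^n$ with $n\ge 1$, $-1$ on $z^n$ with $n\le 0$, which lies in $L_{res}$) one has $\mathrm{ind}\,(\pi_+|_V)=-1$.

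By contrast, the paper's argument uses none of this machinery. It simply writes the eigenvector condition for $x_+\in H_+$ blockwise, observes that the $H_-$-component forces $s_{-+}(x_+)=0$, and concludes; this is exactly your inclusion $W\cap H_+\subset Ker(s_{-+})$ read off one summand at a time, after which the paper declares the reciprocal direction trivial. So the part of your approach that goes through (the $\Leftarrow$ direction at the level of trivial intersections) is already contained in that one-line block computation, while the heavier Fredholm and symmetry apparatus does not deliver the missing direction.
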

\begin{proof}
    The condition $W \cap H_+ = \{0\}$ is equivalent to $$\forall x_+ \in H_+, \frac{1}{2}(x_+ + s(x_+))=x_+ \Leftrightarrow x_+=0.$$
    We have that $$\frac{1}{2}(x_+ + s(x_+)) = \frac{1}{2}(x_+ + s_{++}(x_+)) + \frac{1}{2} s_{-+}(x_+).$$
    We remark that $\frac{1}{2} s_{-+}(x_+) \in H_-,$ which implies that $s_{-+}$ must be injective. The same condition holds for $s_{+-},$ and the reciprocal statement is trivial.
\end{proof}
By Remark \ref{rem:isom}, if $((H_+,H_-),(V,W)) \in Adm(L_{res}),$ the spaces $H_+$ and $V$ are isomorphic, as well as the spaces $H_-$ and $W.$ Let $g \in GL(H)$ (in the diffeological sense) which satisfies $(g(H_+),g(H_-))=(V,W),$ then $g$ must satisfy the following system: 
\begin{eqnarray*} 
&& \left\{ \begin{array}{lcr} 
\frac{1}{2}\left(Id_H + s\right) \circ g \circ Id_{++} &=& g \circ Id_{++} \\
\frac{1}{2}\left(Id_H - s\right) \circ g \circ Id_{--} &=& g \circ Id_{--} \\
\end{array} \right. \\
& \Leftrightarrow &  (s - Id_{H}) \circ g \circ \epsilon = 0 \\
& \Leftrightarrow & \left\{ \begin{array}{lcr} 
 s_{++} g_{++} + s_{+-} g_{-+} + s_{-+}g_{++} + s_{--}g_{-+} &=& g_{++} + g_{-+} \\
s_{++} g_{+-} + s_{+-} g_{--} + s_{-+}g_{+-} + s_{--}g_{--} &=& -g_{--} - g_{+-} \\
\end{array} \right. \\
& \Leftrightarrow & \left\{ \begin{array}{lcr} 
g_{++} & = & s_{++} g_{++} + s_{+-} g_{-+} \\
g_{+-} & = & - s_{++} g_{+-} - s_{+-} g_{--} \\
g_{-+} & = & s_{-+}g_{++} + s_{--}g_{-+} \\
g_{--} & = & -s_{-+}g_{+-} - s_{--}g_{--} \\
\end{array} \right. \\
\end{eqnarray*}
Such an equation is a linear equation on spaces of linear operators $L(H),$ understood as a $L_{res}-$module, for which the study is not in the scope of this paper.  

\subsection{On spaces of pseudo-differential operators}

	We denote by $DO^k(S^1)$,$k \geq 0$, the differential operators of order less or equal than $k$.
	The algebra $DO(E)$ is {filtered  by the} order. It is a subalgebra of the algebra of classical pseudo-differential operators $Cl(S^1,V)$ that we describe shortly hereafter, focusing on its necessary aspects.
	This is an algebra that contains, for example, the square root of the Laplacian \begin{equation} \label{eq:integral}|D| = \Delta^{1/2} = \int_{\Gamma} \lambda^{1/2}(\Delta-\lambda Id)^{-1} d\lambda,\end{equation}
	where $\Delta = - \frac{d^2}{dx^2}$ is the positive Laplacian and $\Gamma$ is a contour around the spectrum of the Laplacian, see e.g. \cite{See,PayBook} for an exposition on contour integrals of pseudo-differential operators. $Cl(S^1,V)$ contains also the inverse of $Id+\Delta,$ and all
	smoothing operators on $L^2(S^1,V). $ Among smoothing operators one can find the heat operator 
	$$e^{-\Delta} = \int_{\Gamma} e^{-\lambda}(\Delta-\lambda Id)^{-1} d\lambda.$$ 
	pseudo-differential operators (maybe non-scalar) are linear operators acting on $C^\infty(S^1,V)$ which reads locally as
	$$ A(f) = \int e^{ix.\xi}\sigma(x,\xi) \hat{f}(\xi) d\xi$$ where $\sigma \in C^\infty(T^*S^1, M_n(\mathbb{C}))$ satisfying additional estimates on its partial derivatives {and $\hat{f}$ means the Fourier transform of $f$}. Basic facts on pseudo-differential operators defined 
	on a vector bundle $E \rightarrow S^1$ can be found e.g. in \cite{Gil}.
	
{\begin{remark} Since $V$ is finite dimensional, there exists $n \in \N^*$ such that $V \sim \C^n.$ Through this identification, a pseudo-differential operator $A \in Cl^(S^1,V)$ can be identified with a matrix $(A_{i,j})_{(i,j)\in \N_n^2}$ with coefficients $$A_{i,j} \in Cl(S^1,\mathbb{C}).$$ In other words, the identification $V \sim \mathbb{C}^n$ that we fix induces the isomorphism of algebras
 $$Cl(S^1,V) \sim M_n(Cl(S^1,\mathbb{C})).$$
 This identification will remain true and useful in the successive constructions below, and will be recalled if appropriate. When it will not carry any ambiguity, we will use the notation $DO(S^1),$ $Cl(S^1),$ etc. instead of $DO(S^1,\mathbb{C}),$ $Cl(S^1,\mathbb{C}),$ etc. for operators acting on the space of smooth functions from $S^1$ to $\mathbb{C}.$
\end{remark}}
	 {Pseudo-differential operators can be also described by their kernel $$K(x,y) = \int_{\R} e^{i(x-y)\xi} \sigma(x,\xi)d\xi$$ which is off-diagonal smooth.} Pseudo-differential operators {with infinitely smooth kernel (or "smoothing" operators)}, i.e. that are maps: $L^2 \rightarrow C^\infty$ form a two-sided ideal that we note by $Cl^{-\infty}(S^1,V).$ Their symbols  are those which are in the Schwartz space $\mathcal{S}(T^*S^1, M_n(\mathbb{C})).$ Remarkably, we have that $Cl^0 \subset L_{res}$ and, for $a \in Cl^0,$ following the notations previously defined, $a_{+-} \in Cl^{-\infty}$ and $a_{-+} \in Cl^{-\infty}$ following \cite{Ma2003,Ma2006-2}. The quotient 
	$\F Cl(S^1,V)=Cl(S^1,V)/Cl^{-\infty}(S^1,V)$ of the algebra of pseudo-differential operators by $Cl^{-\infty}(S^1,V)$ forms the algebra of formal pseudo-differential operators. 
	 {Another algebra, which is actually known as a subalgebra of $\F Cl(S^1,V)$ following \cite{MR2018}, is also called algebra of formal pseudo-differential operators. This algebra is generated by formal Laurent series $$ \Psi DO(S^1,V) = C^\infty(S^1,V)((\partial^{-1}))=\bigcup_{d \in \Z}\left\{ \sum_{k \leq d} a_k \partial^k \right\}$$ where each $a_k \in C^\infty(S^1,M_n(\mathbb{C}))$ and $\partial = \frac{d}{dx}.$ Let us precise hereafter a short but complete description of basic correspondence between $ \Psi DO(S^1,V)$ and $\F Cl(S^1,V)$.}
	
	Symbols $\sigma$ project to formal symbols and there is an isomorphism between formal pseudo-differential operators and formal symbols. A detailed study can be found in \cite[Tome VII]{Dieu}.  Classical pseudo-differential operators are operators $A$ which associated formal symbol $\sigma(A)$ reads as an asymptotic expansion $$\sigma(A)(x,\xi) \sim \sum_{k \in \Z, k \leq o} \sigma_k(A)(x,\xi)$$
	where the \textbf{partial symbol of order k} $$\sigma_k(A): (x,\xi)\in T^*S^1\setminus S^1 \mapsto \sigma_k(A)(x,\xi)\in M_n(\C)$$ is $k-$positively homogeneous in the $\xi-$variable, smooth on $T^*S^1\setminus S^1 = \{(x,\xi)\in T^*S^1 \, | \, \xi \neq 0\}$ and such that {$d\in \Z$} is the order of the operator $A.$ The order of a smoothing operator {we put} equal to $-\infty$ and the formal symbol of a smoothing operator is $0.$

	 The set $\mathcal{F}Cl(S^1,V)$ is not the same as the space of formal operators $ \Psi DO(S^1,V) $ which naturally arises  in the algebraic theory of PDEs, see e.g. \cite{KW} for an overview, but here the partial symbols $\sigma_k(A)$ of $A \in \Psi DO(S^1,V)$ are $k-$homogeneous. By the way one only has $\Psi DO(S^1,V) \subset \mathcal{F}Cl(S^1,V).$ 
	Two approaches for a global symbolic calculus {of }pseudo-differential operators have been described in \cite{BK,Wid}.

	\textbf{Notations.} 
	We shall denote note by
	$Cl^d(S^1,V)$ the vector space of classical pseudo-differential operators 
	of order{ $\leq d$}. We also denote by $Cl^{*}(S^1,V)$ the group of  {invertible in  $Cl(S^1,V)$ operators}.
	We denote the sets of formal operators adding the script $\mathcal{F}$. The algebra of formal pseudo-differential operators, identified wih formal symbols, is noted by ${\mathcal F}Cl^{}(S^1,V),$ and its group of invertible element is ${\mathcal F}Cl^{*}(S^1,V),$ while formal pseudo-differential operators of order less or equal to  $d \in \Z$ is noted by ${\mathcal F}Cl^{d}(S^1,V).$
	
	Through identification of $\F Cl(S^1,V)$ with the corresponding space of formal symbols, the space $\F Cl(S^1,V)$ is equipped with the natural locally convex topology inherited from the space of formal symbols. 
	A formal symbol $\sigma_k$ is a smooth function in  $C^\infty(T^*S^1\setminus S^1, M_n(\mathbb{C}))$ which is $k-$ positively homogeneous (i.e. homogeneous for $k>0)$), and hence it can be identified with an element of $C^\infty(S^1, M_n(\mathbb{C}))^2$ evaluating 
	$\sigma_k$ at $\xi = 1$ and $\xi = -1.$ Identifyting $\F Cl^d(S^1,V)$ with 
	$$ \prod_{k \leq d} C^\infty(S^1, M_n(\mathbb{C}))^2, $$ 
	the vector space $\F Cl^d(S^1,V)$ is a Fr\'echet space, 
	and hence $$\F Cl(S^1,V) = \cup_{d \in \Z} \F Cl^d(S^1,V)$$ 
	is a locally convex topological algebra. For our needs, it will be useful to consider it as a diffeological vector space (equiped with the nebulae diffeology).
	\subsubsection{$Cl$- and $PDO-$ Grassmanians}
	Let us now consider $F = C^\infty(S^1,\mathbb{C}$ and 
 $$ F_{+} = F \cap H_{+}, \quad F_{-} = F \cap H_{-}.$$
 Because the Fourier base is made of smooth functions, the decomposition $F = F_+ \oplus F_-$ holds true topologically, and the full space $Cl(S^1,\mathbb{C})$ of classical, maybe unbounded, pseudo-differential operators is acting smoothly on $F.$ The same holds for $PDO(S^1,\mathbb{C}).$ Therefore, the same constructions of $\mathbf{Gr}(A),$ $\mathbf{Spl}(A)$ for $A = Cl(S^1, \mathbb{C})$ and for $A = PDO(S^1, \mathbb{C})$ hold true.
\subsubsection{Splittings on spaces of formal pseudo-differential operators} \label{s:4.2.2}
Now we will introduce our main classes of classical pseudo-differential operators.
We define the operator $s_{KV}$ on $\F Cl(M,V)$ which extends the operator $ T^*M \rightarrow T^*S^1$ defined by $(x,\xi) \mapsto (x,-\xi)$ by $$ s : \sum_n \sigma_{n}(x,\xi) \mapsto \sum_n  (-1)^n\sigma_{n}\left(s(x,\xi)\right).$$ 
This operator obviously satisfies $s^2_{KV} = Id.$ The same way, let us recall that $\sigma(\epsilon(D))= {\xi}.|\xi|^{-1}.$  We define $s_D: \sum_n \sigma_{n}(x,\xi) \mapsto {\xi}.|\xi|^{-1}.\sum_n  \sigma_{n}\left(x,\xi\right).$ 

\begin{Definition} \label{d7} 
	A formal classical pseudo-differential operator $A$ on $E$ is called 
	\begin{itemize}
		\item {\bf odd 
			class} if and only if for all $n \in \Z$ and all $(x,\xi) \in T^*M$ we have:
		$$ \sigma_n(A) (x,-\xi) = (-1)^n  \sigma_n(A) (x,\xi) \Leftrightarrow s_{KV}(A)=A. $$
		We note it as $A \in \mathcal{F}Cl_{odd}(S^1,V).$ 
		\item  {\bf even 
			class} if and only if for all $n \in \Z$ and all 
		$(x,\xi) \in T^*M$ we have:
		$$ \sigma_n(A) (x,-\xi) = (-1)^{n+1}  \sigma_n(A) (x,\xi)\Leftrightarrow s_{KV}(A)=-A..$$ We note it as $A \in \mathcal{F}Cl_{even}(S^1,V).$
  \item We define $\mathcal{F}Cl_{+}(S^1,V) = Ker(Id - s_D)$ and $\mathcal{F}Cl_{+}(S^1,V) = Ker(Id + s_D).$
	\end{itemize} 
\end{Definition}

Odd class pseudo-differential operators were introduced in \cite{KV1,KV2}; they are called 
``even-even pseudo-differential operators'' in the treatise \cite{Scott}. 
In what follows, the subscript ${}_{odd}$ 
(resp. ${}_{even}$) is used in an obvious way. 
We have the
splitting 
$$ \mathcal{F}Cl(S^1,V) = \mathcal{F}Cl_{odd}(S^1,V) \oplus \mathcal{F}Cl_{even}(S^1,V)\; .$$
$\mathcal{F}Cl(S^1,V)$ also splits into the two ideals $\mathcal{F}Cl(S^1,V) = \mathcal{F}Cl_+(S^1,V) \oplus \mathcal{F}Cl_-(S^1,V)$. 
	This decomposition is explicit in \cite[section 4.4., p. 216]{Ka}, and we give an explicit description here following \cite{Ma2003,Ma2006-2}. 
	
				Similar to this identification, following \cite{MRu2021-1}, we have other { injections} { for } $\lambda \in \R^*:$ 
				
				$$\Phi_{\lambda,\mu} : \sum_{k \in \Z} a_{k} \left(\frac{d}{dx}\right)^k \in \Psi DO(S^1,V) \mapsto \sum_{k \in \Z} a_{k} \left(\lambda^k\left(\frac{d}{dx}\right)_+^k + \mu^k\left(\frac{d}{dx}\right)_-^k\right)  \in \F Cl(S^1,V) $$
				for {$(\lambda,\mu) \in \C^2 \backslash \{(0;0)\},$} with unusual convention $0^k = 0$ $\forall k \in \Z.$ Moreover, changing a little the notations of \cite{MRu2021-1}, we define for $\lambda \in \mathbb{C}^*$ 
    $$\Phi_{\epsilon(D),\lambda} : \sum_{k \in \Z} a_{k} \left(\frac{d}{dx}\right)^k \in \Psi DO(S^1,V) \mapsto \sum_{k \in \Z} a_{k} \epsilon(D) \left(\lambda\frac{d}{dx}\right)^k \in \F Cl(S^1,V) $$ 
				\begin{remark}
					 We have the following identifications: 
      \begin{itemize}
      \item $Im \Phi_{1,1}  = \F Cl_{odd}(S^1,V),$ 
      \item $Im \Phi_{\epsilon,1} = \F Cl_{even}(S^1,V),$
	   \item $Im \Phi_{1,0} = \F Cl_+(S^1,V), $ 
    \item $Im \Phi_{0,1} = \F Cl_-(S^1,V) .$
    \end{itemize}
				\end{remark}
				
    A direct calculation shows the following: 
    \begin{Lemma}
        
        \begin{itemize}
            \item  $\left(\F Cl_{+}(S^1,V),Im \Phi_{\lambda,\mu}\right) \in \mathbf{Spl}(\F Cl(S^1,V)) $ if and only if $(\lambda,\mu) \in (\mathbb{C}^*)^2-\{(\lambda,0) \, | \, \lambda \in \mathbb{C}^* \}.$
            \item  $\left(\F Cl_{-}(S^1,V),Im \Phi_{\lambda,\mu}\right) \in \mathbf{Spl}(\F Cl(S^1,V)) $ if and only if $(\lambda,\mu) \in (\mathbb{C}^*)^2-\{(0,\mu) \, | \, \mu \in \mathbb{C}^* \}.$
            \item  $\left((\F Cl_{+}(S^1,V),\F Cl_{-}(S^1,V)), (Im \Phi_{\lambda,\lambda}, Im \Phi_{\epsilon(D),\lambda})\right) \in Adm(\F Cl(S^1,V)) $ for any $\lambda \in \mathbb{C}^*.$
        \end{itemize}

    \end{Lemma}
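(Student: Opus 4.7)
My plan is to reduce everything to a linear-algebra computation on partial symbols. A formal symbol in $\F Cl(S^1,V)$ of order $\leq d$ is a sequence of $k$-positively-homogeneous functions on $T^*S^1\setminus S^1$ for $k\leq d$, each determined by the pair $(\alpha_k,\beta_k)=(\sigma_k(\cdot,1),\sigma_k(\cdot,-1))\in C^\infty(S^1,M_n(\mathbb{C}))^2$. Under the resulting identification $\F Cl(S^1,V)\cong \prod_{k\leq d}C^\infty(S^1,M_n(\mathbb{C}))^2$, the subspaces $\F Cl_\pm$ are the coordinate loci $\{\beta_k\equiv 0\}$ and $\{\alpha_k\equiv 0\}$; $\F Cl_{odd}$ and $\F Cl_{even}$ are the loci $\{\beta_k=(-1)^k\alpha_k\}$ and $\{\beta_k=(-1)^{k+1}\alpha_k\}$; and $Im\,\Phi_{\lambda,\mu}$ consists of pairs of the form $(b_k(i\lambda)^k,\,b_k(-i\mu)^k)$ indexed by sequences $(b_k)$ in $C^\infty(S^1,M_n(\mathbb{C}))$.

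For the first item, trying to write $(\alpha_k,\beta_k)=(c_k,0)+(b_k(i\lambda)^k,b_k(-i\mu)^k)$ forces $b_k(-i\mu)^k=\beta_k$; this has a unique solution in $(b_k)$ for every choice of $\beta_k$ precisely when $\mu\neq 0$, and then $c_k=\alpha_k-b_k(i\lambda)^k$ is uniquely determined. Conversely, if $\mu=0$ (with $\lambda\in\mathbb{C}^*$), then $Im\,\Phi_{\lambda,0}$ is a nontrivial subspace of $\F Cl_+$, so the sum has nonzero intersection and cannot be direct. The second item is entirely symmetric, with the roles of $\lambda$ and $\mu$ exchanged.

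For the third item I invoke the preceding remark to obtain $Im\,\Phi_{\lambda,\lambda}=\F Cl_{odd}$ and $Im\,\Phi_{\epsilon(D),\lambda}=\F Cl_{even}$ for $\lambda\in\mathbb{C}^*$. Admissibility of $((\F Cl_+,\F Cl_-),(\F Cl_{odd},\F Cl_{even}))$ then reduces to verifying that the two crossed pairs $(\F Cl_+,\F Cl_{even})$ and $(\F Cl_{odd},\F Cl_-)$ are splittings. In the $(\alpha_k,\beta_k)$-coordinates these amount respectively to the triangular systems $(\alpha_k,\beta_k)=(c_k,0)+(a_k,(-1)^{k+1}a_k)$ and $(\alpha_k,\beta_k)=(a_k,(-1)^k a_k)+(0,d_k)$, each of which has a unique closed-form solution ($a_k=(-1)^{k+1}\beta_k$, $c_k=\alpha_k-(-1)^{k+1}\beta_k$ in the first case; $a_k=\alpha_k$, $d_k=\beta_k-(-1)^k\alpha_k$ in the second). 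I expect no substantive obstacle; the only care needed is to perform all computations degree-by-degree on the partial symbols, so that the direct-sum decompositions hold genuinely in $\F Cl(S^1,V)$ rather than merely at principal symbol level.
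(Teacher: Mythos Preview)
Your proof is correct and is precisely the ``direct calculation'' that the paper leaves to the reader; the paper gives no detailed argument for this lemma. One small remark: you invoke the preceding Remark for the identifications $Im\,\Phi_{\lambda,\lambda}=\F Cl_{odd}$ and $Im\,\Phi_{\epsilon(D),\lambda}=\F Cl_{even}$ for all $\lambda\in\mathbb{C}^*$, whereas the Remark only records the case $\lambda=1$; the general case follows immediately since the reparametrisation $a_k\mapsto a_k\lambda^k$ is a bijection on $C^\infty(S^1,M_n(\mathbb{C}))$ for $\lambda\neq 0$, so you may want to add one line to that effect.
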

For $\lambda = 1,$ we get the admissible couple of splittings
$$\left((\F Cl_{+}(S^1,V),\F Cl_{-}(S^1,V)), (\F Cl_{odd}(S^1,V),\F Cl_{even}(S^1,V))\right)$$
and again a direct computation finds
\begin{Theorem}
    $$\tilde{DV}\left((\F Cl_{+}(S^1,V),\F Cl_{-}(S^1,V)), (\F Cl_{odd}(S^1,V),\F Cl_{even}(S^1,V))\right) = 2 Id_{\F Cl(S^1,V)}.$$
\end{Theorem}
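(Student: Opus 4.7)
The plan is to diagonalize the computation over homogeneity degrees. At each degree $k \in \mathbb{Z}$, the partial symbol of $A \in \F Cl(S^1,V)$ is determined by the pair $(a,b) := (\sigma_k(A)(\cdot, 1), \sigma_k(A)(\cdot, -1)) \in C^\infty(S^1, M_n(\mathbb{C}))^2$. Under this coordinatisation the two symmetries act blockwise as $s_D(a,b) = (a,-b)$ and $s_{KV}(a,b) = (\epsilon_k b, \epsilon_k a)$, where $\epsilon_k := (-1)^k$. Consequently the four relevant subspaces decompose as products over $k$ of explicit two-component blocks,
\[
V_1 = \{(a,0)\},\quad W_1 = \{(0,b)\},\quad V_2 = \{(a,\epsilon_k a)\},\quad W_2 = \{(a,-\epsilon_k a)\}.
\]
Since both $\Phi$ and the involution $J_{(-)}$ preserve this decomposition, it suffices to establish $\tilde{DV} = 2\, Id$ in each block.

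Inside a single block, I would compute the four needed projections by elementary linear algebra: $\pi_{(V_2,W_1)}(a,b) = (a, \epsilon_k a)$, $\pi_{(V_1,W_2)}(a,b) = (a + \epsilon_k b, 0)$, and analogously $\pi_{(W_2,V_1)}(a,b) = (-\epsilon_k b, b)$, $\pi_{(W_1,V_2)}(a,b) = (0, b - \epsilon_k a)$. Composing along the cross-ratio diagrams $V_1 \to V_2 \to V_1$ and $W_1 \to W_2 \to W_1$ sends $(a,0) \mapsto (a, \epsilon_k a) \mapsto (a + \epsilon_k^2 a, 0) = (2a, 0)$ and $(0,b) \mapsto (-\epsilon_k b, b) \mapsto (0, b + \epsilon_k^2 b) = (0,2b)$, because the sign $\epsilon_k$ enters squared and cancels. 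Thus $DV((V_1,W_1),(V_2,W_2)) = 2\, Id_{V_1}$ and $DV((W_1,V_1),(W_2,V_2)) = 2\, Id_{W_1}$ in every block.

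To conclude, I would unwind the definition $\tilde{DV} = \Phi + \Phi \circ J_{(-)}$: for $v \in V_1$, $w \in W_1$ in each block,
\[
\tilde{DV}(v+w) = \Phi((V_1,W_1),(V_2,W_2))(v+w) + \Phi((W_1,V_1),(W_2,V_2))(v+w) = 2v + 2w,
\]
so $\tilde{DV} = 2\,Id$ on the block, hence globally. The situation is simply the direct-product version of the $\theta = \pi/4$ case of the two-dimensional example following Remark \ref{cross-ratio-id}. The only nontrivial point is justifying the blockwise reduction; this is automatic here because $s_D$ and $s_{KV}$ commute with every homogeneity projector and $\F Cl(S^1,V)$ is a product over homogeneity degrees, so no topological completion or convergence issue obstructs passage from the blockwise identity to the global one.
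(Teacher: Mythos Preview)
Your proof is correct and is precisely the ``direct computation'' the paper alludes to without spelling out: the paper gives no argument beyond that phrase, so your blockwise reduction via the coordinatisation $\sigma_k \leftrightarrow (\sigma_k(\cdot,1),\sigma_k(\cdot,-1))$ and the explicit $2\times 2$ projection calculus is exactly the intended elaboration. Your identification of each block with the $\theta=\pi/4$ instance of the two-dimensional example is apt, and the justification for passing from blocks to the global statement is sound since $\F Cl(S^1,V)$ is by construction a product over homogeneity degrees on which both $s_D$ and $s_{KV}$ act diagonally.
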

    
		\subsection{On polarizations with respect to signed measures}
Let $X$ be a possibly infinite dimensional diffeological space. Let $T$ be a topology which is a sub-topology of the $D-$ topology on $X.$ Let $\sigma(T)$ be the Borel algebra generated by open subsets of the topology $T.$ Let us recall a key property, see e.g. \cite{Fi2012}:
\begin{Proposition}[Jordan decomposition theorem]
	Any finite signed measure $\mu$ on $\sigma(T)$ splits into $$\mu = \mu^+ - \mu^-,$$ where $\mu^+$ and $\mu^-$ are finite positive measures. 
\end{Proposition}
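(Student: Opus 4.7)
The approach I would take is the classical two-step route via the Hahn decomposition, and then a bookkeeping step to read off $\mu^+$ and $\mu^-$. Recall that a set $P \in \sigma(T)$ is called \emph{positive} for $\mu$ if every measurable subset $E \subset P$ satisfies $\mu(E) \geq 0$, and \emph{negative} if every measurable subset satisfies $\mu(E) \leq 0$. The plan is to produce a measurable partition $X = P^* \sqcup N^*$ where $P^*$ is positive and $N^*$ is negative, and then define $\mu^+(A) = \mu(A \cap P^*)$ and $\mu^-(A) = -\mu(A \cap N^*)$.

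The first step I would carry out is the following key technical lemma: if $E \in \sigma(T)$ satisfies $\mu(E) > 0$, then $E$ contains a positive subset $P \subset E$ with $\mu(P) > 0$. I would prove this by an inductive construction, peeling off from $E$ a sequence of measurable subsets $E_n$ with the most negative available measure (more precisely, with $\mu(E_n)$ less than half of the infimum of $\mu$ on measurable subsets of what remains), and setting $P = E \setminus \bigsqcup_n E_n$. Finiteness of $\mu$ forces $\sum_n \mu(E_n)$ to converge, so the infima tend to zero, leaving $P$ positive, and additivity gives $\mu(P) \geq \mu(E) > 0$.

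The second step is to build $P^*$ itself. Let $\alpha = \sup\{\mu(P) : P \text{ positive}\}$; since $\emptyset$ is positive, $\alpha \geq 0$, and since $\mu$ is finite, $\alpha < \infty$. Pick positive sets $P_n$ with $\mu(P_n) \to \alpha$ and set $P^* = \bigcup_n P_n$. A countable union of positive sets is positive (direct from $\sigma$-additivity on disjointifications), and $\mu(P^*) = \alpha$. Then I would show $N^* = X \setminus P^*$ is negative: if not, $N^*$ would contain a subset of strictly positive measure, hence by the lemma a positive subset $Q$ with $\mu(Q) > 0$, so $P^* \cup Q$ would be positive with $\mu(P^* \cup Q) > \alpha$, a contradiction.

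From here the conclusion is essentially bookkeeping: $\mu^+$ and $\mu^-$ defined as above are countably additive, non-negative (by positivity/negativity of $P^*$ and $N^*$), finite (bounded by $|\mu(P^*)|$ and $|\mu(N^*)|$), and $\mu(A) = \mu(A\cap P^*) + \mu(A\cap N^*) = \mu^+(A) - \mu^-(A)$. The main obstacle, as I see it, is the first step: the inductive extraction of a positive subset of $E$ is delicate because at each stage one must control that the infimum of measures of subsets of the remainder is genuinely approached, and one must use finiteness of $\mu$ crucially to guarantee that the accumulated negative mass does not blow up. Everything else is either straightforward additivity or a supremum argument whose convergence is guaranteed by the finiteness hypothesis.
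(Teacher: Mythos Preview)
Your argument is the standard classical proof via the Hahn decomposition and is correct as written; the extraction lemma, the supremum construction of $P^*$, and the bookkeeping for $\mu^{\pm}$ are all sound, with finiteness used exactly where it is needed.

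However, there is nothing to compare with: the paper does not supply its own proof of this proposition. It is stated as a recalled fact with a reference (``Let us recall a key property, see e.g.\ \cite{Fi2012}'') and then used as input for the subsequent constructions on $\mu$-partitions. So your proposal is not so much an alternative route as a self-contained verification of a cited result; if anything, it adds content the paper deliberately omitted.
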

\noindent
With this result, one can heuristically say that $X$ can split into two subsets defined by $supp(\mu^+)$ and $supp(\mu^-).$ This decomposition is only heuristic in most cases since elementary topological arguments show that $supp(\mu^+)\cap supp(\mu^-) = \emptyset $ if and only if $supp(\mu^+)$ and $supp(\mu^-)$ are topologically disconnected, that is, they are open and closed subsets. Therefore, we have to define
\begin{Definition}
	Let $\mu$ be a finite signed measure on $X.$ Let $(X^+,X^-)$ be a partition of $X$ of measurable sets. We say that this is a $\mu-$partition if $$supp(\mu^\pm) \subset X^\pm.$$
\end{Definition}  
\noindent
With the pair $(X^+,X^-)$ at hand, we can define related symmetries and projections.
\begin{Definition}
	Let $V$ be a diffeological vector space. The $\mu-$partition $(X^+,X^-)$ space of (set theoretical) maps $V^X$ into two spaces that are the kernels $Ker(p_{X^+})$ and  $Ker(p_{X^-})$ of the projectors
	$$ p_{X^+} : f \in V^X \mapsto \mathbf{1}_{X^+}f$$ and 
		$$ p_{X^-} : f \in V^X \mapsto \mathbf{1}_{X^-}f,$$
		where $\mathbf{1}_{X^-}$ and $\mathbf{1}_{X^+}$ are the indicator maps
		$$ \mathbf{1}_{X^\pm}(x) = \left\{\begin{array}{ccl}
		1 & \hbox{ if } & x \in X^{\pm} \\
		0 & \hbox{ if } & x \notin X^{\pm}	
		\end{array}
			\right.$$ 
			We define $$s_{(X^+,X^-)} = 2 p_{X^+} - Id_{V^X}.$$
\end{Definition}
\begin{remark}
	The terminology of $\mu-$partition is justified by $\mu(f) = |\mu|\left(s_{(X^+,X^-)} (f)\right)$ for any bounded measurable function $f.$
\end{remark}
Let us first fix a $\mu-$partition $(X^+,X^-),$ and we define $$V = Im\left(p_{X^+}\right) = \left\{f \in V^X\, | \, \forall x \in X^-, f(x)=0 \right\}$$
and 
$$W = Im\left(p_{X^-}\right) = \left\{f \in V^X\, | \, \forall x \in X^+, f(x)=0 \right\}$$
 We have the following easy property:
 \begin{Proposition}
 	$V^X,$ as a cartesian product indexed by $V,$ is a diffeological vector space for the pointwise operations,
 $(V,W)\in \mathbf{Spl}(V^X)$ and	$(V,W) = Pol\left(s_{(X^+,X^-)}\right).$ 
 \end{Proposition}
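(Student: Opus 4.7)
The plan is to verify the three assertions in turn, invoking the general formalism from Section 3.1 applied to the diffeological algebra $A = L(V^X)$, and reducing everything to the pointwise/coordinatewise nature of the operations.

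First, I would check that $V^X$ is a diffeological vector space. Since $V^X = \prod_{x \in X} V$, Proposition \ref{prod1} (extended to arbitrary products, as noted in the excerpt) supplies the product diffeology. Addition and scalar multiplication on $V^X$ are pointwise, hence componentwise, so they decompose as products of the addition and scalar multiplication on $V$; smoothness of these componentwise operations is inherited from the corresponding operations on $V$ via the universal property of the product diffeology.

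Second, I would verify that $p_{X^+}$ and $p_{X^-}$ lie in $L(V^X)$, i.e.\ that they are smooth linear endomorphisms. Linearity is immediate from the pointwise definition $p_{X^+}(f) = \mathbf{1}_{X^+}f$. For smoothness, observe that for each $x \in X$ the evaluation $ev_x \circ p_{X^+}$ equals either $ev_x$ (if $x \in X^+$) or the zero map (if $x \in X^-$); both are smooth, so $p_{X^+}$ is smooth for the product diffeology on the target $V^X$. Since $\mathbf{1}_{X^+}^2 = \mathbf{1}_{X^+}$ and $\mathbf{1}_{X^+} + \mathbf{1}_{X^-} = 1$, we get $p_{X^+}^2 = p_{X^+}$, $p_{X^-}^2 = p_{X^-}$, and $p_{X^+} + p_{X^-} = Id_{V^X}$. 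Hence $p_{X^+} \in \mathbf{P}(L(V^X))$, and $s_{(X^+,X^-)} = 2 p_{X^+} - Id_{V^X} \in \mathbf{S}(L(V^X))$ by the very definition of $\mathbf{S}(A)$ recalled in Section \ref{s:3.1}.

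Third, I would identify the eigenspaces of $s_{(X^+,X^-)}$. The relation $p_{X^+} + p_{X^-} = Id_{V^X}$ together with $p_{X^+}p_{X^-} = 0$ yields $Im(p_{X^+}) = Ker(p_{X^-})$ and $Im(p_{X^-}) = Ker(p_{X^+})$. Consequently
\[
Ker(s_{(X^+,X^-)} - Id_{V^X}) = Ker(2p_{X^-}) = Ker(p_{X^-}) = Im(p_{X^+}) = V,
\]
and similarly $Ker(s_{(X^+,X^-)} + Id_{V^X}) = Im(p_{X^-}) = W$. By the definition of $\mathbf{Spl}(L(V^X))$ and of $Pol$, this gives both $(V,W) \in \mathbf{Spl}(V^X)$ and $(V,W) = Pol(s_{(X^+,X^-)})$.

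I do not anticipate a serious obstacle: the only subtlety is the smoothness of $p_{X^+}$ for the product diffeology on $V^X$, which follows from the fact that the product diffeology is characterized precisely by the smoothness of all evaluations, and each coordinate of $p_{X^+}$ is either an evaluation or zero. All other steps are direct translations of the algebraic identities $\mathbf{1}_{X^+}^2 = \mathbf{1}_{X^+}$ and $\mathbf{1}_{X^+} + \mathbf{1}_{X^-} = 1$ into the language of $\mathbf{P}(A)$, $\mathbf{S}(A)$ and $Pol$.
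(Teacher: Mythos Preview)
Your proof is correct. The paper itself does not give a proof of this proposition: it is introduced as ``the following easy property'' and stated without argument. Your verification is exactly the routine check one would expect --- product diffeology on $V^X$, smoothness of $p_{X^+}$ via the coordinatewise characterization of the product diffeology, the algebraic identities $\mathbf{1}_{X^+}^2 = \mathbf{1}_{X^+}$ and $\mathbf{1}_{X^+} + \mathbf{1}_{X^-} = 1$, and the identification of the $\pm 1$ eigenspaces of $s_{(X^+,X^-)}$ with $Im(p_{X^\pm})$ --- so there is nothing to compare beyond noting that you have supplied the details the paper omits.
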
 
However, we have the following ``no-go'' properties. 
\begin{Theorem}
	\begin{itemize}
		\item Unless $X$ is equipped with the discrete diffeology or $V \neq \{0\},$ the action of $Diff(X)$ on $V^X$ by composition on the right is not smooth (in fact, not continuous for the underlying $D-$topologies.) 
		\item Given two different partitions $(X_1^+,X_1^-)$ and $(X_2^+,X_2^-)$ associated to two (possibly identical) signed measures,  the corresponding couple of splittings $((V_1,W_1),(V_2,W_2))$ is never admissible.
		\item The set of symetries of the type $s(X^+,X^-),$ defined for any partition $(X^+,X^-)$ of $X$ related to any signed finite measure, is totally disconnected in $\mathbf{S}(V^X).$
	\end{itemize}
\end{Theorem}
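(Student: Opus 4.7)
The three assertions are essentially independent and I would treat them with rather different techniques, handling the easier ones first.

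For the second item, the plan is to argue by contradiction using the definition of admissibility. Given two distinct $\mu$-partitions $(X_1^+,X_1^-)$ and $(X_2^+,X_2^-)$, at least one of the sets $X_1^+\cap X_2^-$ or $X_1^-\cap X_2^+$ is non-empty; up to symmetry pick $x_0\in X_1^+\cap X_2^-$ and any $v\neq 0$ in $V$. Consider the Dirac-type map $\delta_{x_0,v}\in V^X$ sending $x_0$ to $v$ and everything else to $0$. Then $\delta_{x_0,v}$ vanishes on $X_1^-$ and on $X_2^+$, so it belongs simultaneously to $V_1=\mathrm{Im}(p_{X_1^+})$ and to $W_2=\mathrm{Im}(p_{X_2^-})$. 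This produces a non-trivial element of $V_1\cap W_2$, so $(V_1,W_2)\notin\mathbf{Spl}(V^X)$ and the couple is not admissible.

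For the third item, the plan is to reduce smoothness of a candidate path in this subset of $\mathbf{S}(V^X)$ to a pointwise condition incompatible with discrete values. A smooth path $t\mapsto s_t$ in $\{s_{(X^+,X^-)}\}$, regarded inside $\mathbf{S}(V^X)$ via its functional diffeology, satisfies $s_t(f)(x)=\sigma_t(x)f(x)$ with $\sigma_t(x)\in\{-1,+1\}$. Testing against the constant plot $f\equiv v$ with $v\neq 0$, the composition with the evaluation at $x$ must be smooth in $V$, so $t\mapsto \sigma_t(x)v$ is smooth. Since the only smooth $\mathbb{R}$-valued maps with values in $\{-1,+1\}$ are constants, each $\sigma_t(x)$ is independent of $t$, and the path is constant. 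Hence path-components of this subset are singletons.

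For the first item, which is the main obstacle, the plan is to exhibit a smooth plot in $Diff(X)\times V^X$ whose image under right composition is not a plot in $V^X$ with its product diffeology. Non-discreteness of the diffeology on $X$ produces a non-constant smooth $1$-plot; in the concrete settings covered by the examples this upgrades to a smooth $1$-parameter family $t\mapsto g_t$ in $Diff(X)$ and a point $x_0\in X$ such that $t\mapsto g_t(x_0)$ is non-constant. Choosing any $v\in V\setminus\{0\}$ and a subset $A\subset X$ for which the composition $t\mapsto \mathbf{1}_A(g_t(x_0))$ is discontinuous at some $t_0$ (which exists as soon as the orbit meets both $A$ and its complement), set $f=\mathbf{1}_A v$. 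Then $f\in V^X$, the pair $(t,f)$ is a plot of $Diff(X)\times V^X$, but $t\mapsto f(g_t(x_0))$ is discontinuous, so the composition action is not a plot, and in particular it is not continuous for the underlying $D$-topologies.

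The genuine difficulty is concentrated in item one: the abstract assertion that any non-discrete diffeology on $X$ admits a non-trivial smooth family of diffeomorphisms is not automatic for arbitrary diffeological spaces, and the paper's proof presumably uses additional mild structural assumptions (e.g.\ a smooth manifold-like substructure) or restricts to the concrete examples of interest where an explicit family of diffeomorphisms is at hand.
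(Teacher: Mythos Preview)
Your proposal is correct and follows essentially the same route as the paper. For item~2 the paper uses exactly your Dirac-type argument; for item~3 the paper tests with $\delta_x^v$ rather than the constant function $f\equiv v$, but the evaluation at $x$ yields the same $\{-v,v\}$-valued path either way; for item~1 the paper takes $f=\delta_x^v$ (your indicator $\mathbf{1}_A v$ with $A=\{x\}$), and the step you flag as non-automatic---that a non-discrete diffeology on $X$ forces a non-constant smooth path in $Diff(X)$---is simply asserted without justification in the paper as well.
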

\begin{proof}
	If $V \neq \{0\},$ there exists $v \in V-\{0\}.$ Let $x \in X.$ We define the Dirac like function 
	 $\delta_x^v$ defined by 
	 $$\forall x' \in ,  \delta_x^v(x') = \left\{\begin{array}{ccl} v & \hbox{ if } & x'=x \\
	 	0 & \hbox{ if } & x' \neq x \end{array}\right.$$
 	If $X$ is not discrete, $Diff(X)$ is not discrete, and we choose a non constant path $g$ in $Diff(X)$ for which we can assume without restriction that $g(0)=Id_X.$ and we assume that we choose $x$ such that there exists $t \neq 0$ such that $g(t)(x) \neq x.$ Then, the smooth path $ev_x \circ \delta_x^v \circ g$ from $\R$ to $V$ takes only two values: $O$ and $v.$ This is impossible since the image of a connected set by a smooth map is connected. This proves the first point.   
	
	Let $(X_1^+,X_1^-)$ and $(X_2^+,X_2^-)$ be two different partitions of $X.$ Since the partitions are different, we have either $X_1^+ \cap X_2^- \neq \emptyset$ or $X_2^+ \cap X_1^- \neq \emptyset.$ Let $x \in X_1^+ \Delta X_2^+ = (X_1^+ \cap X_2^-) \cup (X_2^+ \cap X_1^-).$
	Then the Dirac function $\delta_x \in (V_1 \cap W_2) \cup (V_2 \cap W_1).$ Therefore, $V_1 \cap W_2 \neq \emptyset$ or $V_2 \cap W_1 \neq \emptyset,$ which shows that $((V_1,W_2),(V_2,W_1))\notin \mathbf{Spl}(V^X).$ Therefore,  $((V_1,W_1),(V_2,W_2))$ is not admissible.
	For the third point, we first assume that $V=\R.$
	Let us now assume that $s_1$ and $s_2$ are the two symmetries constructed from $(X_1^+,X_1^-)$ and $(X_2^+,X_2^-)$ and that there is a smooth path $c$ in the set of symetries of the type $s(X^+,X^-),$ defined for any partition $(X^+,X^-)$ of $X$ related to any signed finite measure, such that $c(0)=s_1$ and $c(1)=s_2.$ Let us consider now the evaluation map $ev_x.$ This map is smooth on $V^X,$ and hence the map $t \mapsto ev_x \circ c(t) \circ \delta_x$ is also smooth from $\R$ to $\R.$ But the image set of this map is $\{-1,1\}$ which is disconnected. Therefore the assumption is impossible. Passing from $\R^X$ to $V^X,$ we change $\delta_x$ to $\delta_x^v. $  
\end{proof}

\begin{remark}
Therefore, if one has to consider smooth structures on $V^X,$ one has to enrich or adapt the cartesian product diffeology. Let us make a review of few apparently interesting properties.
\begin{itemize}
	\item The cartesian product diffeology is the natural diffeology for the smoothness of the evaluation maps $ev_x.$ Indeed, it is the pull-back diffeology $\bigcap_{x \in X} ev^*\p_V.$
	\item The same way, let us consider the map $$\Phi : (f,g) \in V^X \times Diff(X) \rightarrow f \circ g \in V^X.$$ Then $\Phi(V^X,\p_{Diff(X)})$ generates the smallest diffeology for which the action of $Diff(X)$ on $V^X$ is smooth.   
\end{itemize} 
These two diffeologies are completely different.  
\end{remark}
These two points suggests that we need to restrict our space of functions in order to be interesting.
We now consider the space of \emph{smooth} functions $C^\infty(X,V)$ instead of $V^X.$ The space $C^\infty(X,V)$ is equipped with its functional diffeology. We hope to be able to expand this approach, envolving extra-structures, in a forthcoming work.		
	\section*{Acknowledgements}
 The author would like to thank Vladimir Rubtsov for informal introductions to his works on generalizations of cross-ratios to non-commutative settings, and for explanations on his algebraic motivations. 

 The starting point of this work is an idea that emerged during the XL workshop on methods in Goemetry and Physics in Bialowieza, july 2023, in which the author was invited for a plenary lecture. Helpful discussions principally with Daniel Beltit\`a, but also with Gerald Goldin, Tomas\'z Goli\'nski, Tudor Ratiu and Alice Barbara Tumpach, discussions on their own research interests, convinced the author about the accuracy of this topic. May they all be acknowledged, as well as the organizers of the workshop for this invitation and this very stimulating week.

\end{document}